\newtheorem{theorem}{Theorem}[section]
\newtheorem{lemma}[theorem]{Lemma}
\newtheorem{proposition}[theorem]{Proposition}
\newtheorem{corollary}[theorem]{Corollary}
\theoremstyle{definition}
\newtheorem{defn}[theorem]{Definition}
\newtheorem{example}[theorem]{Example}
\theoremstyle{remark}
\newtheorem{remark}[theorem]{Remark}
\newtheorem{notation}[theorem]{Notation}
\numberwithin{equation}{section}
\newcommand{\Set}{\mathbf{Set}}
\newcommand{\CAT}{\mathsf{Cat}}
\newcommand{\COF}{\mathsf{Cof}}
\newcommand{\SPAN}{\mathsf{Span}}
\newcommand{\LENS}{\mathsf{Lens}}
\newcommand{\SOPF}{\mathsf{SOpf}}
\newcommand{\E}{\mathcal{E}}
\newcommand{\phibar}{\overline{\varphi}}
\newcommand{\br}{\rightleftharpoons}
\newcommand{\etalchar}[1]{$^{#1}$}
\begin{document}

\title{Internal split opfibrations and cofunctors}

\author{Bryce Clarke}

\address{Centre of Australian Category Theory\\
Macquarie University, NSW 2109, Australia}

\email{bryce.clarke1@hdr.mq.edu.au}

\thanks{The author is supported by the 
Australian Government Research Training Program Scholarship.}

\subjclass[2020]{18D30, 18D40}

\keywords{internal category, fibration, cofunctor, lens}



\begin{abstract}
Split opfibrations are functors equipped with a suitable choice of 
opcartesian lifts. 
The purpose of this paper is to characterise internal split opfibrations
through separating the structure of a suitable choice of lifts from the 
property of these lifts being opcartesian. 
The underlying structure of an internal split opfibration is captured by
an internal functor equipped with an internal cofunctor, while the 
property may be expressed as a pullback condition, akin to the simple 
condition on an internal functor to be an internal discrete opfibration. 
Furthermore, this approach provides two additional characterisations
of internal split opfibrations, via the d\'{e}calage construction and 
strict factorisation systems. 
For small categories, this theory clarifies several aspects of delta 
lenses which arise in computer science. 
\end{abstract}

\maketitle

\section{Introduction}
\label{sec:introduction}

Split opfibrations are functors equipped with a functorial choice of 
opcartesian lifts, and they have been widely studied since the seminal
work of \cite{Gra66}. 
In the paper \cite{Str74}, split opfibrations were generalised to an 
arbitrary $2$-category $\mathcal{K}$, and this definition readily 
specialises to the case $\mathcal{K} = \CAT(\E)$, for a fixed category 
$\E$ with pullbacks, to yield internal split opfibrations. 
However, there are other ways to define internal split opfibrations, 
including in \cite{Joh77} as internal categories in the category of 
internal diagrams, or in the recent paper \cite{vG18} using lax 
codescent objects.

This paper aims to develop a new approach to internal split 
opfibrations, using the theory of \emph{cofunctors} from \cite{Agu97}.  
The structure of an \emph{internal lens} 
(Definition~\ref{defn:internallens}) is introduced as an internal 
functor $f \colon A \rightarrow B$ equipped with an  internal cofunctor 
$\varphi \colon B \nrightarrow A$, and is equivalent to a commutative 
diagram of internal functors, 
\begin{equation}
\label{eqn:intro}
\tag{$\ast$}
\begin{tikzcd}[column sep = small, row sep = small]
& \Lambda
\arrow[ld, "\varphi"']
\arrow[rd, "\phibar"]
& \\
A 
\arrow[rr, "f"']
& & B
\end{tikzcd}
\end{equation}
where $\varphi$ is identity-on-objects and $\phibar$ is an internal 
discrete opfibration. 
An internal split opfibration is defined as an internal lens satisfying 
a certain property (Definition~\ref{defn:internalsplitopf}).
The main result (Theorem~\ref{thm:main}) will be to characterise 
internal split opfibrations via a natural condition on \eqref{eqn:intro} 
with respect to the right d\'{e}calage comonad on $\CAT(\E)$.
An internal lens with a certain strict factorisation system is also 
shown to characterise internal split opfibrations 
(Theorem~\ref{thm:johnstone}), providing a link to the explicit 
axiomatisation in \cite{Joh77}. 

The motivation for this paper arises from both theoretical and applied 
concerns. 
In part, there was a goal to find a defining property of internal 
split opfibrations which closely resembles the remarkably simple 
condition on an internal functor to be an internal discrete opfibration 
(Definition~\ref{defn:internaldopf}). 
However, split opfibrations are functors with 
\emph{additional structure},
and it becomes necessary to unravel this definition to discover the 
appropriate internal structure with the desired property. 
This key idea is realised by separating an internal split opfibration 
into three parts: an internal functor, an internal cofunctor
(Definition~\ref{defn:internalcofun}), 
and the defining property expressed as a pullback condition.

Internal cofunctors are a kind of morphism between internal categories.
They were developed in the thesis \cite[Chapter 4]{Agu97}, 
and generalise the earlier concept of comorphisms between vector bundles 
(and other related structures) introduced in \cite{HM93}.
In the ordinary case (Example~\ref{ex:cofunctor}), 
a cofunctor between categories is a functorial \emph{lifting} of 
morphisms in the opposite direction to an assignment on objects, thus 
providing an ideal structure to describe the \emph{splitting} of a split
opfibration. 
An important result in \cite{HM93} is that every cofunctor 
$\varphi \colon B \nrightarrow A$ may be represented as a span of 
internal functors, 
\begin{equation*}
\begin{tikzcd}[column sep = small, row sep = small]
& \Lambda
\arrow[rd, "\varphi"]
\arrow[ld, "\phibar"']
& \\
B 
& & A
\end{tikzcd}
\end{equation*}
where $\phibar$ is an internal discrete opfibration and $\varphi$ is 
identity-on-objects (Proposition~\ref{prop:cofunctor}).
This reveals how cofunctors may be naturally understood 
as morphisms between categories, and clarifies the abstract 
composition of cofunctors (Remark~\ref{rem:cofunctorcat}) 
through the concrete composition of spans. 

There is also another natural way of understanding cofunctors 
as morphisms between categories. 
Recall that for a category $\E$ with pullbacks, an internal category 
is a formal monad in the bicategory $\SPAN(\E)$ of spans. 
An internal functor corresponds to a lax monad morphism whose $1$-cell
component is a \emph{right adjoint} in $\SPAN(\E)$. 
Dually, an internal cofunctor corresponds to a lax monad morphism whose 
$1$-cell component is a \emph{left adjoint}. 
In this sense, cofunctors are dual to functors, and may be 
unified under the notion of two-dimensional partial map, as in 
\cite[Appendix]{LS02}, or Mealy morphism, as in \cite{Par12}, 
however this relationship is not pursued further in this paper. 

An internal functor suitably equipped with an internal cofunctor is   
called an internal lens (Definition~\ref{defn:internallens}) 
between internal categories. 
When $\E = \Set$, this morphism between small categories is known as
a \emph{delta lens} (Example~\ref{ex:deltalens}) introduced in the paper 
\cite{DXC11}.
The general notion of a lens, whose name derives from \emph{focusing} 
on a database view, now covers many different structures in the 
literature (see \cite{JR16} for some examples) since the influential 
paper \cite{FGMPS07}. 
Delta lenses were originally defined in computer science as an algebraic 
framework for \emph{bidirectional model transformations}, 
and their relationship to split opfibrations is important for 
applications such as \emph{least-change view updating} \cite{JRW12}.
This paper is motivated by the goal to better understand the 
precise relationship between delta lenses and split opfibrations, 
through generalising the result in \cite{JR13} that split opfibrations 
are delta lenses whose lifts are opcartesian.

Internal split opfibrations are defined as internal lenses with 
a certain property, expressed as a pullback condition 
(Definition~\ref{defn:internalsplitopf}).
In the recent paper \cite[Section~6]{AU17}, split opfibrations were 
instead defined via additional structure on a delta lens, using the 
syntax of directed containers from \cite{ACU14}.
Generalising the approach in \cite{AU17}, internal split opfibrations 
may also be described via additional structure on an internal lens,  
and this characterisation is shown 
(Proposition~\ref{prop:internalsplitopf}) to be equivalent to 
Definition~\ref{defn:internalsplitopf}, while also specialising to the
familiar definition in the $\E = \Set$ case. 

Given the category $\LENS(\E)$ of internal categories and internal 
lenses, there is an identity-on-objects functor 
$U \colon \LENS(\E) \rightarrow \CAT(\E)$ which forgets the internal
cofunctor structure. 
The right d\'{e}calage construction (Definition~\ref{defn:rightdec}) 
is a comonad on $\CAT(\E)$, and it is natural to ask if this comonad 
restricts along $U$ to an endofunctor on $\LENS(\E)$. 
The main result of this paper (Theorem~\ref{thm:main}) characterises 
internal split opfibrations as exactly those internal lenses for which
the right d\'{e}calage construction yields an internal lens. 

A classical result regarding split opfibrations is that every morphism 
in the domain factorises uniquely into a chosen opcartesian lift 
followed by a ``vertical'' morphism in the fibre 
(Example~\ref{ex:exchi}), 
thus yielding a strict factorisation system 
(Definition~\ref{defn:strictfact}) as in \cite{RW02}. 
This strict factorisation system may be generalised to internal lenses 
(Proposition~\ref{prop:isostructure}) in the form of additional 
structure. 
Surprisingly, the property of an internal lens having this strict 
factorisation system provides another characterisation of 
internal split opfibrations (Theorem~\ref{thm:johnstone}). 
This result also reveals an equivalence between the definition of
internal split opfibration in this paper and the explicit
axiomatisation via structure on an internal functor 
appearing as an exercise in \cite[Chapter~2]{Joh77}.

\subsection*{Structure of the paper}
Section~\ref{sec:background} recalls the relevant concepts from 
internal category theory for the convenience of the reader, and 
does not contain any original results. 
Section~\ref{sec:internallenses} introduces internal lenses,
and includes basic properties and examples. 
Section~\ref{sec:decalage} provides a new definition of internal 
split opfibrations and their characterisation via d\'{e}calage. 
Section~\ref{sec:factorisation} explores the connection to strict 
factorisation systems, and the earlier axiomatisation of Johnstone. 
Section~\ref{sec:conclusion} provides some concluding remarks.

\subsection*{Acknowledgements} 
The author would like to thank Michael Johnson and Stephen Lack for 
their suggestions and helpful advice on this work. 
The author is also grateful for the feedback from the anonymous 
referee. 
The key ideas on internal lenses appeared in an extended 
abstract \cite{Cla19} which was presented during the ACT2019 conference.

\section{Internal categories, functors, and cofunctors}
\label{sec:background}

This section reviews the background material on internal categories and 
functors, in a fixed category $\E$ with pullbacks, and establishes the 
notation for the rest of the paper.  
This section also adapts the definition of an internal cofunctor from 
\cite{Agu97} to this setting, and includes an internal version of the 
result \cite[Theorem~5.8]{HM93} which states that every cofunctor may be 
represented as a span of functors, with left leg a discrete opfibration 
and right leg an identity-on-objects functor.

\begin{defn}
\label{defn:internalcat}
An \emph{internal category} $A$ is a diagram,
\begin{equation*}
\begin{tikzcd}[column sep = large]
A_{0}
\arrow[r, "i_{0}" description]
\arrow[from = r, "d_{0}" description, shift right = 4]
\arrow[from = r, "d_{1}" description, shift left = 4]
& A_{1}
\arrow[r, "i_{0}" description, shift left = 4]
\arrow[r, "i_{1}" description, shift right = 4]
\arrow[from = r, "d_{0}" description, shift right = 8]
\arrow[from = r, "d_{1}" description]
\arrow[from = r, "d_{2}" description, shift left = 8]
& A_{2}
\arrow[from = r, "d_{0}" description, shift right = 6]
\arrow[from = r, "d_{1}" description, shift right = 2]
\arrow[from = r, "d_{2}" description, shift left = 2]
\arrow[from = r, "d_{3}" description, shift left = 6]
& A_{3}
\end{tikzcd}
\end{equation*}
where the objects $A_{2}$ and $A_{3}$ are defined by the pullbacks,
\begin{equation}
\label{eqn:cat-pullbacks}
\begin{tikzcd}[row sep = small, column sep = small]
& A_{2} 
\arrow[ld, "d_{0}"']
\arrow[rd, "d_{2}"]
\arrow[dd, phantom, "\lrcorner" rotate = -45, very near start]
& \\
A_{1}
\arrow[rd, "d_{1}"']
& & A_{1}
\arrow[ld, "d_{0}"]
\\
& A_{0} &
\end{tikzcd}
\qquad \qquad 
\begin{tikzcd}[row sep = small, column sep = small]
& A_{3} 
\arrow[ld, "d_{0}"']
\arrow[rd, "d_{3}"]
\arrow[dd, phantom, "\lrcorner" rotate = -45, very near start]
& \\
A_{2}
\arrow[rd, "d_{2}"']
& & A_{2}
\arrow[ld, "d_{0}"]
\\
& A_{1} &
\end{tikzcd}
\end{equation}
where the \emph{identity map} $i_{0} \colon A_{0} \rightarrow A_{1}$ 
and the \emph{composition map} $d_{1} \colon A_{2} \rightarrow A_{1}$ 
satisfy the commutative diagrams,
\begin{equation}
\label{eqn:identity-comp}
\begin{tikzcd}
& A_{0}
\arrow[ld, "1"']
\arrow[rd, "1"]
\arrow[d, "i_{0}"]
& \\
A_{0}
& A_{1}
\arrow[l, "d_{0}"]
\arrow[r, "d_{1}"']
& A_{0}
\end{tikzcd}
\qquad \qquad
\begin{tikzcd}
A_{1}
\arrow[d, "d_{0}"']
& A_{2}
\arrow[l, "d_{0}"']
\arrow[r, "d_{2}"]
\arrow[d, "d_{1}"]
& A_{1}
\arrow[d, "d_{1}"]
\\
A_{0}
& A_{1}
\arrow[l, "d_{0}"]
\arrow[r, "d_{1}"']
& A_{0}
\end{tikzcd}
\end{equation}
and satisfy the \emph{unitality} and \emph{associativity} 
axioms given by the commutative diagrams: 
\begin{equation}
\label{eqn:unit-assoc}
\begin{tikzcd}[row sep = small, column sep = small]
& A_{1}
\arrow[ld, "i_{0}"']
\arrow[rd, "i_{1}"]
\arrow[dd, "1"]
& \\
A_{2}
\arrow[rd, "d_{1}"']
& & A_{2}
\arrow[ld, "d_{1}"]
\\
& A_{1} & 
\end{tikzcd}
\qquad \qquad
\begin{tikzcd}[row sep = small, column sep = small]
& A_{3}
\arrow[ld, "d_{1}"']
\arrow[rd, "d_{2}"]
& \\
A_{2}
\arrow[rd, "d_{1}"']
& & A_{2}
\arrow[ld, "d_{1}"]
\\
& A_{0} & 
\end{tikzcd}
\end{equation}
An internal category is often depicted by its underlying directed graph 
consisting of the \emph{object of objects} $A_{0}$, 
the \emph{object of morphisms} $A_{1}$, 
the \emph{domain map} $d_{0} \colon A_{1} \rightarrow A_{0}$, and the
\emph{codomain map} $d_{1} \colon A_{1} \rightarrow A_{0}$.
The morphisms $i_{0}, i_{1} \colon A_{1} \rightarrow A_{2}$ 
and $d_{1}, d_{2} \colon A_{3} \rightarrow A_{2}$ 
appearing in \eqref{eqn:unit-assoc} 
are defined using the universal property of the pullback. 
\end{defn}

\begin{defn}
\label{defn:internalfun}
Let $A$ and $B$ be internal categories. 
An \emph{internal functor} $f \colon A \rightarrow B$ is a pair of 
morphisms, 
\begin{equation*}
	f_{0} \colon A_{0} \longrightarrow B_{0}
	\qquad \qquad
	f_{1} \colon A_{1} \longrightarrow B_{1}
\end{equation*}
satisfying the commutative diagrams for a directed graph homomorphism, 
\begin{equation}
\label{eqn:graphhomo}
\begin{tikzcd}
A_{0}
\arrow[d, "f_{0}"']
& A_{1}
\arrow[l, "d_{0}"']
\arrow[r, "d_{1}"]
\arrow[d, "f_{1}"]
& A_{0}
\arrow[d, "f_{0}"]
\\
B_{0}
& B_{1}
\arrow[l, "d_{0}"]
\arrow[r, "d_{1}"']
& B_{0}
\end{tikzcd}
\end{equation}
and which respect the identity and composition maps: 
\begin{equation}
\label{eqn:funidcomp}
\begin{tikzcd}
A_{0}
\arrow[r, "i_{0}"]
\arrow[d, "f_{0}"']\
& A_{1}
\arrow[d, "f_{1}"]
\\
B_{0}
\arrow[r, "i_{0}"']
& B_{1}
\end{tikzcd}
\qquad \qquad
\begin{tikzcd}
A_{2}
\arrow[r, "d_{1}"]
\arrow[d, "f_{2}"']\
& A_{1}
\arrow[d, "f_{1}"]
\\
B_{2}
\arrow[r, "d_{1}"']
& B_{1}
\end{tikzcd}
\end{equation}
The morphism $f_{2} \colon A_{2} \rightarrow B_{2}$ is defined 
using the universal property of the pullback. 
\end{defn}

\begin{remark}
\label{rem:CatE}
Let $\CAT(\E)$ denote the category of internal categories and internal 
functors in a fixed category $\E$ with pullbacks. 
It is well-known that $\CAT(\E)$ has pullbacks which are computed 
component-wise. 
\end{remark}

There are two classes of internal functors which are of particular 
interest, and the following lemma will be useful for many of the main
results. 

\begin{defn}
\label{defn:iso-on-obj}
An internal functor $f \colon A \rightarrow B$ is called 
\emph{isomorphism-on-objects} if the morphism $f_{0}$ is an isomorphism.
In particular, if $f_{0} = 1$ the functor is called 
\emph{identity-on-objects}.
\end{defn}

\begin{defn}
\label{defn:internaldopf}
An \emph{internal discrete opfibration} is an internal functor 
$f \colon A \rightarrow B$ such that the following commutative diagram,
appearing in \eqref{eqn:graphhomo}, is a pullback:
\begin{equation}
\label{eqn:discopf}
\begin{tikzcd}[row sep = small, column sep = small]
& A_{1}
\arrow[ld, "d_{0}"']
\arrow[rd, "f_{1}"]
& \\
A_{0}
\arrow[rd, "f_{0}"']
& & B_{1}
\arrow[ld, "d_{0}"]
\\
& B_{0} &
\end{tikzcd}
\end{equation}
Dually, an \emph{internal discrete fibration} is an internal functor 
$f \colon A \rightarrow B$ such that the right-hand square 
in \eqref{eqn:graphhomo} is a pullback. 
\end{defn}

\begin{lemma}
\label{lem:important}
Let $\mathcal{C}$ denote the class of isomorphism-on-objects internal 
functors (resp. internal discrete opfibrations). 
Then $\mathcal{C}$ satisfies the following conditions: 
\begin{enumerate}[label={(\roman*)}, font=\normalfont, itemsep=+1ex]
\item $\mathcal{C}$ is closed under composition;
\item $\mathcal{C}$ contains the isomorphisms;
\item $\mathcal{C}$ is stable under pullback by internal functors;
\item for all composable pairs of internal functors 
$f \colon A \rightarrow B$ and $g \colon B \rightarrow C$, 
if both $g$ and $g \circ f$ are in $\mathcal{C}$, 
then $f$ is in $\mathcal{C}$. 
\end{enumerate}
\end{lemma}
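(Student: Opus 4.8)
The plan is to treat the two classes separately, the first reducing to a purely formal observation. Throughout, the essential input is Remark~\ref{rem:CatE}: pullbacks in $\CAT(\E)$ are computed component-wise, so both the object-of-objects assignment $(-)_{0}\colon\CAT(\E)\to\E$ and the object-of-morphisms assignment $(-)_{1}\colon\CAT(\E)\to\E$ are pullback-preserving functors.

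For the class $\mathcal{C}$ of isomorphism-on-objects functors, I would observe that $\mathcal{C}$ is exactly the preimage under $(-)_{0}$ of the class of isomorphisms in $\E$. The class of isomorphisms in any category is closed under composition, contains all isomorphisms, and has the $2$-out-of-$3$ property (so in particular satisfies (iv)), while in any category a pullback of an isomorphism is again an isomorphism. Conditions (i), (ii) and (iv) for $\mathcal{C}$ then follow because $(-)_{0}$ is a functor, and (iii) follows because $(-)_{0}$ preserves pullbacks; this disposes of this class with essentially no computation.

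For the class $\mathcal{C}$ of internal discrete opfibrations, I would argue directly from the defining square \eqref{eqn:discopf}, using the pasting and cancellation lemmas for pullbacks in $\E$ together with component-wise pullbacks in $\CAT(\E)$. For (i), given discrete opfibrations $f\colon A\to B$ and $g\colon B\to C$, the square \eqref{eqn:discopf} for $g\circ f$ is the horizontal composite of those for $f$ and for $g$, hence a pullback. For (iv), if the squares for $g$ and for $g\circ f$ are pullbacks, then the square for $f$ is a pullback by the cancellation half of the pasting lemma. For (ii), an isomorphism of internal categories has $f_{0}$ and $f_{1}$ invertible, and a commuting square two of whose parallel edges are isomorphisms is automatically a pullback; the identity functor is the special case. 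For (iii), the pullback $p\colon P\to C$ of $f\colon A\to B$ along $h\colon C\to B$ has, component-wise, $P_{1}=C_{1}\times_{B_{1}}A_{1}$ and $P_{0}=C_{0}\times_{B_{0}}A_{0}$; substituting the discrete opfibration equation $A_{1}=A_{0}\times_{B_{0}}B_{1}$ and reassociating by the pasting lemma twice identifies $P_{1}$ with $C_{1}\times_{C_{0}}P_{0}$ over the structure maps, which is precisely the assertion that $p$ is a discrete opfibration.

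I expect the only step needing care to be (iii) for discrete opfibrations, where one is effectively chasing a commutative cube of component-wise pullback squares — the discrete opfibration square for $f$ together with the defining pullbacks of $P_{0}$ and $P_{1}$ — and must conclude that the remaining face is a pullback. This is the main obstacle, but it is dispatched by repeated application of the standard pasting lemma and involves no genuine subtlety; every other verification is formal.
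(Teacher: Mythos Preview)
Your argument is correct. The paper states Lemma~\ref{lem:important} without proof, treating it as a standard fact, so there is no approach to compare against; your reduction of the isomorphism-on-objects case to the functor $(-)_{0}$ and your use of the pullback pasting/cancellation lemma for discrete opfibrations are exactly the expected verifications, and your handling of~(iii) for discrete opfibrations via the cube chase is sound.
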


\begin{remark}
\label{rem:discopf}
An internal discrete opfibration is an internal functor with a certain 
\emph{property}, however this property may also be specified by 
\emph{unique structure}. 
Thus an internal discrete opfibration is equivalent to an internal 
functor equipped with a morphism,
\[
	\varphi_{1} \colon A_{0} \times_{B_{0}} B_{1} \longrightarrow A_{1}
\]
making the following diagrams commute:  
\begin{equation*}
\begin{tikzcd}[column sep = small]
& A_{0} \times_{B_{0}} B_{1}
\arrow[ld, "\pi_{0}"']
\arrow[d, "\varphi_{1}"]
\arrow[rd, "\pi_{1}"]
& \\
A_{0}
& A_{1}
\arrow[l, "d_{0}"]
\arrow[r, "f_{1}"']
& B_{1}
\end{tikzcd}
\qquad \qquad
\begin{tikzcd}[column sep = tiny]
& A_{0} \times_{B_{0}} B_{1}
\arrow[from=ld, "{\langle d_{0},\, f_{1} \rangle}"]
\arrow[rd, "\varphi_{1}"]
& \\
A_{1}
\arrow[rr, "1"']
& & A_{1}
\end{tikzcd}
\end{equation*}
This relationship between property and structure will later be important 
when characterising internal split opfibrations.
\end{remark}

\begin{example}[$\E = \Set$]
\label{ex:discopf}
A discrete opfibration is a functor $f \colon A \rightarrow B$ between 
small categories such that for each pair 
$(a \in A, u \colon fa \rightarrow b \in B)$ there exists a 
\emph{unique} morphism $\varphi(a, u) \colon a \rightarrow a'$ in $A$ 
such that $f\varphi(a, u) = u$. 
\begin{equation*}
\begin{tikzcd}
A
\arrow[d, "f"']
& a
\arrow[d, phantom, "\vdots"]
\arrow[r, "{\varphi(a, u)}"]
& a'
\arrow[d, phantom, "\vdots"]
\\
B
& f a
\arrow[r, "u"]
& b
\end{tikzcd}
\end{equation*}
Note that the codomain $a'$ of the morphism $\varphi(a, u)$ is also 
a function of the pair $(a, u)$, and this may be made explicit through 
writing $a' = p(a, u)$.
\end{example}

Internal cofunctors will now be introduced as a kind of morphism between 
internal categories. 
They generalise both internal discrete opfibrations and
isomorphism-on-objects internal functors. 
Since the definition of an internal cofunctor is more involved than that
of an internal functor, it is beneficial to first introduce the 
definition for small categories. 

\begin{example}[$\E = \Set$]
\label{ex:cofunctor}
A \emph{cofunctor} $\varphi \colon B \nrightarrow A$ between small 
categories consists of a function on objects 
$a \in A \, \mapsto \, \varphi a \in B$  
together with a function on pairs, 
\[
	(a \in A, u \colon \varphi a \rightarrow b \in B) 
	\qquad \longmapsto \qquad
	\varphi(a, u) \colon a \rightarrow p(a, u)
\]
satisfying the axioms: 
\begin{enumerate}[(1), itemsep=+1ex]
\item $\varphi p(a, u) = b$
\item $\varphi(a, 1_{\varphi a}) = 1_{a}$
\item $\varphi(a, v \circ u) = \varphi(p(a, u), v) \circ \varphi(a, u)$
\end{enumerate}
A cofunctor may be understood as a functorial 
\emph{lifting} of arrows from $B$ to $A$: 
\begin{equation*}
\begin{tikzcd}
A
& a
\arrow[d, phantom, "\vdots"]
\arrow[r, "{\varphi(a, u)}"]
& p(a, u) 
\arrow[d, phantom, "\vdots"]
\\
B
\arrow[u, "\varphi"{xshift=-3pt}, "/"{anchor=center,sloped}]
& \varphi a
\arrow[r, "u"]
& b
\end{tikzcd}
\end{equation*}
Note that the codomain $p(a, u)$ of the morphism $\varphi(a, u)$ 
is given explicitly as a function of the pair $(a, u)$. 
\end{example}

\begin{defn}
\label{defn:internalcofun}
Let $A$ and $B$ be internal categories. 
An \emph{internal cofunctor} $\varphi \colon B \nrightarrow A$ 
is a pair of morphisms,
\begin{equation*}
	\varphi_{0} \colon A_{0} \longrightarrow B_{0}
	\qquad \qquad
	\varphi_{1} \colon \Lambda_{1} \longrightarrow A_{1}
\end{equation*}
with the objects $\Lambda_{1}$ and $\Lambda_{2}$ defined by the 
pullbacks,
\begin{equation}
\label{eqn:cofun-pullbacks}
\begin{tikzcd}[row sep = small, column sep = small]
& \Lambda_{1} 
\arrow[ld, "d_{0}"']
\arrow[rd, "\phibar_{1}"]
\arrow[dd, phantom, "\lrcorner" rotate = -45, very near start]
& \\
A_{0}
\arrow[rd, "\varphi_{0}"']
& & B_{1}
\arrow[ld, "d_{0}"]
\\
& B_{0} &
\end{tikzcd}
\qquad \qquad 
\begin{tikzcd}[row sep = small, column sep = small]
& \Lambda_{2} 
\arrow[ld, "d_{0}"']
\arrow[rd, "\phibar_{2}"]
\arrow[dd, phantom, "\lrcorner" rotate = -45, very near start]
& \\
\Lambda_{1}
\arrow[rd, "\phibar_{1}"']
& & B_{2}
\arrow[ld, "d_{0}"]
\\
& B_{1} &
\end{tikzcd}
\end{equation}
satisfying commutative diagrams with respect to the domain and 
codomain maps, 
\begin{equation}
\label{eqn:cofundomcod}
\begin{tikzcd}
& \Lambda_{1} 
\arrow[ld, "d_{0}"']
\arrow[d, "\varphi_{1}"]
\arrow[rr, "\phibar_{1}"]
& & B_{1}
\arrow[d, "d_{1}"]
\\
A_{0}
& A_{1}
\arrow[l, "d_{0}"]
\arrow[r, "d_{1}"']
& A_{0}
\arrow[r, "\varphi_{0}"']
& B_{0}
\end{tikzcd}
\end{equation}
and with respect to the identity and composition maps:
\begin{equation}
\label{eqn:cofunidcomp}
\begin{tikzcd}
A_{0}
\arrow[d, equal]
\arrow[r, "i_{0}"]
& 
\Lambda_{1}
\arrow[d, "\varphi_{1}"] \\
A_{0}
\arrow[r, "i_{0}"']
& A_{1}
\end{tikzcd}
\qquad \qquad
\begin{tikzcd}
\Lambda_{2}
\arrow[r, "d_{1}"]
\arrow[d, "\varphi_{2}"']
& \Lambda_{1}
\arrow[d, "\varphi_{1}"]
\\
A_{2}
\arrow[r, "d_{1}"']
& A_{1}
\end{tikzcd}
\end{equation}
It is useful to define the morphism 
$p_{1} = d_{1}\varphi_{1}\colon \Lambda_{1} \rightarrow A_{0}$. 
The morphisms $i_{0} \colon A_{0} \rightarrow \Lambda_{1}$, 
$d_{1} \colon \Lambda_{2} \rightarrow \Lambda_{1}$, and 
$\varphi_{2} \colon \Lambda_{2} \rightarrow A_{2}$ are defined 
using the universal property of the pullback: 
\begin{equation*}
\begin{tikzcd}[row sep = small, column sep = small]
& A_{0}
\arrow[rd, "\varphi_{0}"]
\arrow[lddd, bend right, "1"']
\arrow[dd, "i_{0}", dashed]
& \\
& & 
B_{0}
\arrow[dd, "i_{0}"]
\\[-1em]
& \Lambda_{1}
\arrow[ld, "d_{0}"']
\arrow[rd, "\phibar_{1}"]
\arrow[dd, phantom, "\lrcorner" rotate = -45, very near start]
& \\
A_{0}
\arrow[rd, "\varphi_{0}"']
& & B_{1}
\arrow[ld, "d_{0}"]
\\
& B_{0} &
\end{tikzcd}
\quad
\begin{tikzcd}[row sep = small, column sep = small]
& \Lambda_{2}
\arrow[ld, "d_{0}"']
\arrow[rd, "\phibar_{2}"]
\arrow[dd, "d_{1}", dashed]
& \\
\Lambda_{1}
\arrow[dd, "d_{0}"']
& & B_{2}
\arrow[dd, "d_{1}"]
\\[-1em]
& \Lambda_{1} 
\arrow[ld, "d_{0}"']
\arrow[rd, "\phibar_{1}"]
\arrow[dd, phantom, "\lrcorner" rotate = -45, very near start]
& \\
A_{0}
\arrow[rd, "\varphi_{0}"']
& & B_{1}
\arrow[ld, "d_{0}"]
\\
& B_{0} &
\end{tikzcd}
\quad
\begin{tikzcd}[row sep = small, column sep = small]
& \Lambda_{2}
\arrow[ld, "d_{0}"']
\arrow[rd, "\phibar_{2}"]
\arrow[dd, "p_{2}", dashed]
& \\
\Lambda_{1}
\arrow[dd, "p_{1}"']
& & B_{2}
\arrow[dd, "d_{2}"]
\\[-1em]
& \Lambda_{1} 
\arrow[ld, "d_{0}"']
\arrow[rd, "\phibar_{1}"]
\arrow[dd, phantom, "\lrcorner" rotate = -45, very near start]
& \\
A_{0}
\arrow[rd, "\varphi_{0}"']
& & B_{1}
\arrow[ld, "d_{0}"]
\\
& B_{0} &
\end{tikzcd}
\quad
\begin{tikzcd}[row sep = small, column sep = small]
& \Lambda_{2}
\arrow[ld, "d_{0}"']
\arrow[rd, "p_{2}"]
\arrow[dd, "\varphi_{2}", dashed]
& \\
\Lambda_{1}
\arrow[dd, "\varphi_{1}"']
& & \Lambda_{1}
\arrow[dd, "\varphi_{1}"]
\\[-1em]
& A_{2} 
\arrow[ld, "d_{0}"']
\arrow[rd, "d_{2}"]
\arrow[dd, phantom, "\lrcorner" rotate = -45, very near start]
& \\
A_{1}
\arrow[rd, "d_{1}"']
& & A_{1}
\arrow[ld, "d_{0}"]
\\
& A_{0} &
\end{tikzcd}
\end{equation*}
\end{defn}

\begin{notation}
In contrast to Remark~\ref{rem:discopf}, the above definition uses the 
notation $\Lambda_{1}$ for the pullback $A_{0} \times_{B_{0}} B_{1}$ as
it is more compact (likewise for $\Lambda_{2}$). 
The above definition also uses the suggestive notation 
$d_{0} \colon \Lambda_{1} \rightarrow A_{0}$ and 
$\phibar_{1} \colon \Lambda_{1} \rightarrow B_{1}$ 
for the pullback projections 
$\pi_{0} \colon A_{0} \times_{B_{0}} B_{1} \rightarrow A_{0}$ and 
$\pi_{1} \colon A_{0} \times_{B_{0}} B_{1} \rightarrow B_{1}$, 
respectively, as this will be particularly useful for the following 
results. 
\end{notation}

\begin{remark}
\label{rem:cofunctorcat}
Let $\COF(\E)$ denote the category of internal categories and internal 
cofunctors in a fixed category $\E$ with pullbacks. 
Given internal cofunctors $\gamma \colon C \nrightarrow B$ and 
$\varphi \colon B \nrightarrow A$, the composite internal cofunctor 
$\varphi \circ \gamma \colon C \nrightarrow A$ consists of the pair of
morphisms: 
\begin{equation}
\label{eqn:compositecofun}
\gamma_{0} \varphi_{0} \colon A_{0} \longrightarrow C_{0}
\qquad \qquad
\varphi_{1} \langle \pi_{0}, \gamma_{1}(\varphi_{0} \times 1) \rangle 
\colon A_{0} \times_{C_{0}} C_{1} \longrightarrow A_{1}
\end{equation}
\end{remark}

\begin{example}
An internal discrete opfibration $A \rightarrow B$ induces an internal
cofunctor $B \nrightarrow A$, while an isomorphism-on-objects internal
functor $A \rightarrow B$ induces an internal cofunctor 
$A \nrightarrow B$. 
\end{example}

Let $\SPAN_{\mathsf{iso}}(\CAT(\E))$ denote the \emph{category} whose 
objects are internal categories and whose morphisms are 
\emph{isomorphism classes} of spans of internal functors. 
There exists a faithful, identity-on-objects functor 
$\COF(\E) \rightarrow \SPAN_{\mathsf{iso}}(\CAT(\E))$ which assigns each
internal cofunctor to an isomorphism class of spans, which have left leg
an internal discrete opfibration and right leg an isomorphism-on-objects 
internal functor. 
In practice, an internal cofunctor will always be identified with a 
chosen representative of this isomorphism class, whose right leg is an 
identity-on-objects internal functor. 
This representative span for a cofunctor will now be constructed.

\begin{lemma}
\label{lem:cofunctor}
Given an internal cofunctor $\varphi \colon B \nrightarrow A$, there 
exists an internal category, denoted $\Lambda$, defined by the diagram: 
\begin{equation*}
\begin{tikzcd}[column sep = large]
A_{0}
\arrow[r, "i_{0}" description]
\arrow[from = r, "d_{0}" description, shift right = 4]
\arrow[from = r, "p_{1}" description, shift left = 4]
& \Lambda_{1}
\arrow[from = r, "d_{0}" description, shift right = 4]
\arrow[from = r, "d_{1}" description]
\arrow[from = r, "p_{2}" description, shift left = 4]
& \Lambda_{2}
\end{tikzcd}
\end{equation*}
\end{lemma}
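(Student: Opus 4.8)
The plan is to verify Definition~\ref{defn:internalcat} directly, taking $A_{0}$ as the object of objects, $\Lambda_{1}$ as the object of morphisms, domain map $d_{0} \colon \Lambda_{1} \to A_{0}$, codomain map $p_{1} = d_{1}\varphi_{1} \colon \Lambda_{1} \to A_{0}$, identity map $i_{0} \colon A_{0} \to \Lambda_{1}$, and composition map $d_{1} \colon \Lambda_{2} \to \Lambda_{1}$ --- all of which are already at hand from Definition~\ref{defn:internalcofun}. One must also extend the displayed diagram one level further, taking $\Lambda_{3}$ to be the pullback of $p_{2} \colon \Lambda_{2} \to \Lambda_{1}$ against $d_{0} \colon \Lambda_{2} \to \Lambda_{1}$, so that there is somewhere for the associativity axiom to live.

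The first and main step is to check that $\Lambda_{2}$, together with the morphisms $d_{0}, p_{2} \colon \Lambda_{2} \to \Lambda_{1}$, is the pullback of the codomain map $p_{1}$ against the domain map $d_{0}$, so that it genuinely plays the role of the object of composable pairs in $\Lambda$; and similarly for $\Lambda_{3}$. The subtlety is that $\Lambda_{2}$ is defined in Definition~\ref{defn:internalcofun} as a pullback over $B_{1}$ rather than over $A_{0}$, so the two descriptions must be reconciled. This is exactly where the compatibility square \eqref{eqn:cofundomcod}, which records the identity $\varphi_{0} p_{1} = d_{1}\phibar_{1}$, enters: combining it with the identifications $\Lambda_{1} = A_{0} \times_{B_{0}} B_{1}$ and $B_{2} = B_{1} \times_{B_{0}} B_{1}$ and repeatedly applying the pasting lemma for pullbacks shows that a morphism into $\Lambda_{2}$ is the same thing as a pair $(f, g)$ of morphisms into $\Lambda_{1}$ with $p_{1}f = d_{0}g$, the composable pair in $B$ being supplied by $\phibar_{1}f$ and $\phibar_{1}g$ together with the equality $d_{1}\phibar_{1}f = \varphi_{0}p_{1}f = \varphi_{0}d_{0}g = d_{0}\phibar_{1}g$.

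With the correct pullbacks identified, the remaining axioms for $\Lambda$ are obtained by transporting those of $A$ along the morphisms $\varphi_{n} \colon \Lambda_{n} \to A_{n}$. Thus $d_{0}i_{0} = 1$ and $p_{1}i_{0} = 1$ follow from $\varphi_{1}i_{0} = i_{0}$ in \eqref{eqn:cofunidcomp} and the identity axioms \eqref{eqn:identity-comp} for $A$; compatibility of the composition map $d_{1} \colon \Lambda_{2} \to \Lambda_{1}$ with domains and codomains, unitality with respect to the degeneracies $i_{0}, i_{1} \colon \Lambda_{1} \to \Lambda_{2}$, and associativity over $\Lambda_{3}$ each reduce to the corresponding axiom for $A$ using $\varphi_{1}d_{1} = d_{1}\varphi_{2}$ from \eqref{eqn:cofunidcomp} (and the analogous relation one level up), after precomposing with the relevant jointly monic family of pullback projections. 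Each of these is a routine diagram chase.

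The expected obstacle lies entirely in the bookkeeping of the second step: making the chain of pullback pastings precise enough that the specific morphisms $d_{0}, d_{1}, p_{2} \colon \Lambda_{2} \to \Lambda_{1}$ (and their analogues on $\Lambda_{3}$) produced in Definition~\ref{defn:internalcofun} are seen to be exactly the projection, composition, and second-leg maps required by Definition~\ref{defn:internalcat}. Once this is set up, nothing in the verification of the internal category axioms is more than a mechanical consequence of the cofunctor axioms and the internal category axioms for $A$ and $B$.
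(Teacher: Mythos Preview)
Your proposal is correct and follows the same approach as the paper: the pullback pasting lemma to identify $\Lambda_{2}$ (with projections $d_{0}$ and $p_{2}$) as the pullback of $p_{1}$ against $d_{0}$, followed by routine verification of the axioms, with the paper likewise suppressing the details of \eqref{eqn:unit-assoc}. One small correction: when you check unitality and associativity---which are equalities of morphisms into $\Lambda_{1}$---the jointly monic pair you postcompose with is $(d_{0}, \phibar_{1})$, and the $\phibar_{1}$ component reduces these to the corresponding axioms for $B$ rather than for $A$; the cofunctor relation $\varphi_{1}d_{1} = d_{1}\varphi_{2}$ is what you use for the $p_{1}$-side of \eqref{eqn:identity-comp}, exactly as the paper does.
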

\begin{proof}
We prove that axiom \eqref{eqn:identity-comp} for an internal category
is satisfied, and suppress details required to prove 
\eqref{eqn:unit-assoc}.

First, to show $\Lambda_{2}$ is the pullback of the morphisms 
$d_{0}, p_{1} \colon \Lambda_{1} \rightarrow A_{0}$, 
consider the following diagrams which are equal by the construction of 
$p_{2} \colon \Lambda_{2} \rightarrow \Lambda_{1}$ in 
Definition~\ref{defn:internalcofun}:
\begin{equation*}
\begin{tikzcd}
\Lambda_{2}
\arrow[r, "p_{2}"]
\arrow[d, "d_{0}"']
& \Lambda_{1}
\arrow[d, "d_{0}"']
\arrow[r, "\phibar_{1}"]
\arrow[rd, phantom, "\lrcorner" very near start]
& B_{1}
\arrow[d, "d_{0}"]
\\
\Lambda_{1}
\arrow[r, "p_{1}"']
& A_{0}
\arrow[r, "\varphi_{0}"']
& B_{0}
\end{tikzcd}
\qquad = \qquad
\begin{tikzcd}
\Lambda_{2}
\arrow[r, "\phibar_{2}"]
\arrow[d, "d_{0}"']
\arrow[rd, phantom, "\lrcorner" very near start]
& B_{2}
\arrow[d, "d_{0}"']
\arrow[r, "d_{2}"]
\arrow[rd, phantom, "\lrcorner" very near start]
& B_{1}
\arrow[d, "d_{0}"]
\\
\Lambda_{1}
\arrow[r, "\phibar_{1}"']
& B_{1}
\arrow[r, "d_{1}"']
& B_{0}
\end{tikzcd}
\end{equation*}
Using the pullback pasting lemma, the remaining square must be a 
pullback, as required.

To show the identity map $i_{0} \colon A_{0} \rightarrow \Lambda_{1}$
and the composition map 
$d_{1} \colon \Lambda_{2} \rightarrow \Lambda_{1}$ are well-defined, 
first notice by their construction in 
Definition~\ref{defn:internalcofun} that the following diagrams commute:
\begin{equation*}
\begin{tikzcd}
& A_{0}
\arrow[d, "i_{0}"]
\arrow[ld, "1"']
\\
A_{0}
& \Lambda_{1}
\arrow[l, "d_{0}"]
\end{tikzcd}
\qquad \qquad
\begin{tikzcd}
\Lambda_{1}
\arrow[d, "d_{0}"']
& \Lambda_{2}
\arrow[l, "d_{0}"']
\arrow[d, "d_{1}"]
\\
A_{0}
& \Lambda_{1}
\arrow[l, "d_{0}"]
\end{tikzcd}
\end{equation*}
The counterparts to the above diagrams are obtained by pasting as 
follows: 
\begin{equation*}
\begin{tikzcd}[row sep = scriptsize]
A_{0}
\arrow[dd, "i_{0}"']
\arrow[rd, "i_{0}"]
\arrow[rrdd, bend left = 40, "1"]
& & \\
& A_{1}
\arrow[rd, "d_{1}"]
& \\
\Lambda_{1}
\arrow[ru, "\varphi_{1}"]
\arrow[rr, "p_{1}"']
& & A_{0}
\end{tikzcd}
\qquad \qquad
\begin{tikzcd}
\Lambda_{2}
\arrow[rd, "\varphi_{2}"]
\arrow[ddd, "d_{1}"']
\arrow[rrr, "p_{2}"]
& & & \Lambda_{1}
\arrow[ld, "\varphi_{1}"']
\arrow[ddd, "p_{1}"]
\\
& A_{2}
\arrow[d, "d_{1}"']
\arrow[r, "d_{2}"]
& A_{1}
\arrow[d, "d_{1}"]
& \\
&
A_{1}
\arrow[r, "d_{1}"']
& A_{0}
\arrow[rd, "1"]
& \\
\Lambda_{1}
\arrow[ru, "\varphi_{1}"]
\arrow[rrr, "p_{1}"']
& & & A_{0}
\end{tikzcd}
\end{equation*}
Thus axiom \eqref{eqn:identity-comp} is satisfied as required. 
\end{proof}

\begin{example}[$\E = \Set$]
\label{ex:cofuncat}
For a cofunctor $\varphi \colon B \nrightarrow A$ between small 
categories as in Example~\ref{ex:cofunctor}, the category $\Lambda$ has 
objects $a \in A$ and morphisms given by pairs depicted: 
\begin{equation*}
\begin{tikzcd}[column sep = large]
a 
\arrow[r, "{(a,\, u)}"]
& p(a, u)
\end{tikzcd}
\end{equation*}
Thus $\Lambda$ may be understood as the category of \emph{chosen lifts}
in $A$ with respect to $\varphi$. 
\end{example}

\begin{proposition}
\label{prop:cofunctor}
Given an internal cofunctor $\varphi \colon B \nrightarrow A$ there is a 
span of internal functors, 
\begin{equation}
\begin{tikzcd}[column sep = small]
& \Lambda
\arrow[ld, "\phibar"']
\arrow[rd, "\varphi"]
& \\
B 
& & A
\end{tikzcd}
\end{equation}
with left leg an internal discrete opfibration and 
right leg an identity-on-objects internal functor. 
\end{proposition}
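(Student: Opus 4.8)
The plan is to build the two legs of the span from data already assembled and then verify the required properties using the pullback-pasting lemma. Lemma~\ref{lem:cofunctor} has already constructed the internal category $\Lambda$ with $\Lambda_0 = A_0$, $\Lambda_1 = A_0 \times_{B_0} B_1$, object-of-composable-pairs $\Lambda_2$, identity $i_0 \colon A_0 \to \Lambda_1$, and composition $d_1 \colon \Lambda_2 \to \Lambda_1$. So the first step is simply to exhibit the two legs on objects and morphisms: the right leg $\varphi \colon \Lambda \to A$ is given by $\mathrm{id} \colon A_0 \to A_0$ on objects and by $\varphi_1 \colon \Lambda_1 \to A_1$ on morphisms, while the left leg $\phibar \colon \Lambda \to B$ is given by $\varphi_0 \colon A_0 \to B_0$ on objects and by $\phibar_1 \colon \Lambda_1 \to B_1$ (the second pullback projection) on morphisms.

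The second step is to check that these are internal functors, i.e.\ the diagrams \eqref{eqn:graphhomo} and \eqref{eqn:funidcomp}. For $\varphi$, compatibility with $d_0$ is immediate since $d_0 \colon \Lambda_1 \to A_0$ is literally the first pullback projection; compatibility with $d_1$ is the left-hand triangle of \eqref{eqn:cofundomcod} together with the definition $p_1 = d_1 \varphi_1$; preservation of identities is the left square of \eqref{eqn:cofunidcomp}; preservation of composition is the right square of \eqref{eqn:cofunidcomp} (noting that the induced map $\varphi_2$ on composable pairs agrees with the one from Definition~\ref{defn:internalcofun} by the universal property of the pullback defining $A_2$). For $\phibar$, compatibility with $d_0$ is commutativity of the defining pullback square for $\Lambda_1$ in \eqref{eqn:cofun-pullbacks}; compatibility with $d_1$ is the outer rectangle of \eqref{eqn:cofundomcod}; preservation of identities and composition follow by composing the structure maps of $\Lambda$ with $\phibar_1$ and using the universal properties, exactly as in the pasting diagrams of the proof of Lemma~\ref{lem:cofunctor}.

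The third step is the substantive one: showing the left leg $\phibar$ is an internal discrete opfibration, i.e.\ that the square
\begin{equation*}
\begin{tikzcd}[row sep = small, column sep = small]
& \Lambda_{1}
\arrow[ld, "d_{0}"']
\arrow[rd, "\phibar_{1}"]
& \\
\Lambda_{0}
\arrow[rd, "\phibar_{0}"']
& & B_{1}
\arrow[ld, "d_{0}"]
\\
& B_{0} &
\end{tikzcd}
\end{equation*}
is a pullback. But this is precisely the left-hand pullback square in \eqref{eqn:cofun-pullbacks} defining $\Lambda_1 = A_0 \times_{B_0} B_1$, under the identifications $\Lambda_0 = A_0$ and $\phibar_0 = \varphi_0$. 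So there is essentially nothing to prove here once the bookkeeping is done. Finally, the right leg $\varphi$ is identity-on-objects by construction, and $\COF(\E) \to \SPAN_{\mathsf{iso}}(\CAT(\E))$ being faithful and identity-on-objects follows from tracking that the composite of spans built this way reproduces the composite cofunctor of Remark~\ref{rem:cofunctorcat}, though this last clause is only needed if one wants the functoriality statement and not just the span for a single cofunctor.

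I expect the only real obstacle to be notational: the symbols $d_0, d_1, i_0$ are heavily overloaded across $A$, $B$, $\Lambda$, $\Lambda_1$, $\Lambda_2$, and one must be careful that the maps named identically in Lemma~\ref{lem:cofunctor} genuinely restrict/extend the ones in Definition~\ref{defn:internalcofun}; once that is pinned down, every verification reduces either to one of the already-established commuting diagrams \eqref{eqn:cofundomcod}--\eqref{eqn:cofunidcomp} or to a direct application of the pullback-pasting lemma, with no genuinely new computation required.
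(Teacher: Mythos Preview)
Your proposal is correct and follows essentially the same approach as the paper: define the two legs by the pairs $(\varphi_0,\phibar_1)$ and $(1_{A_0},\varphi_1)$, observe that all the functoriality diagrams are already present in Definition~\ref{defn:internalcofun}, and note that the discrete-opfibration square for $\phibar$ is literally the defining pullback \eqref{eqn:cofun-pullbacks}. The paper's proof is terser and simply points to where each required diagram already lives, whereas you spell out the correspondences more explicitly; your closing remarks about functoriality of $\COF(\E) \to \SPAN_{\mathsf{iso}}(\CAT(\E))$ go beyond what this proposition requires and are handled separately in the paper.
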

\begin{proof}
The internal discrete opfibration $\phibar \colon \Lambda \rightarrow B$ 
is determined by the pair of morphisms $(\varphi_{0}, \phibar_{1})$ 
which satisfy the commutative diagrams: 
\begin{equation*}
\begin{tikzcd}
A_{0}
\arrow[d, "\varphi_{0}"']
& \Lambda_{1}
\arrow[l, "d_{0}"']
\arrow[r, "p_{1}"]
\arrow[d, "\phibar_{1}"]
\arrow[ld, phantom, "\llcorner", very near start]
& A_{0}
\arrow[d, "\varphi_{0}"]
\\
B_{0}
& B_{1}
\arrow[l, "d_{0}"]
\arrow[r, "d_{1}"']
& B_{0}
\end{tikzcd}
\qquad 
\begin{tikzcd}
A_{0}
\arrow[d, "\varphi_{0}"']
\arrow[r, "i_{0}"]
& \Lambda_{1}
\arrow[d, "\phibar_{1}"]
\\
B_{0}
\arrow[r, "i_{0}"']
& B_{1}
\end{tikzcd}
\qquad
\begin{tikzcd}
\Lambda_{2}
\arrow[d, "\phibar_{2}"']
\arrow[r, "d_{1}"]
& \Lambda_{1}
\arrow[d, "\phibar_{1}"]
\\
B_{2}
\arrow[r, "d_{1}"']
& B_{1}
\end{tikzcd}
\end{equation*}
The internal identity-on-objects functor 
$\varphi \colon \Lambda \rightarrow A$ is determined by the pair of 
morphisms $(1_{A_{0}}, \varphi_{1})$ which satisfy the commutative 
diagrams: 
\begin{equation*}
\begin{tikzcd}
A_{0}
\arrow[d, "1"']
& \Lambda_{1}
\arrow[l, "d_{0}"']
\arrow[r, "p_{1}"]
\arrow[d, "\varphi_{1}"]
& A_{0}
\arrow[d, "1"]
\\
A_{0}
& A_{1}
\arrow[l, "d_{0}"]
\arrow[r, "d_{1}"']
& A_{0}
\end{tikzcd}
\qquad
\begin{tikzcd}
A_{0}
\arrow[d, "1"']
\arrow[r, "i_{0}"]
& \Lambda_{1}
\arrow[d, "\varphi_{1}"]
\\
A_{0}
\arrow[r, "i_{0}"']
& A_{1}
\end{tikzcd}
\qquad
\begin{tikzcd}
\Lambda_{2}
\arrow[d, "\varphi_{2}"']
\arrow[r, "d_{1}"]
& \Lambda_{1}
\arrow[d, "\varphi_{1}"]
\\
A_{2}
\arrow[r, "d_{1}"']
& A_{1}
\end{tikzcd}
\end{equation*}
These commutative diagrams all appear in 
Definition~\ref{defn:internalcofun}, thus the proof is complete. 
\end{proof}

The above proposition shows that every internal cofunctor may be 
understood as a chosen representative span of an isomorphism class in 
$\SPAN_{\mathsf{iso}}(\CAT(\E))$. 
Given an arbitrary span in a suitable isomorphism class, there is a 
converse which states that there exists a corresponding internal 
cofunctor. 

\begin{proposition}
\label{prop:converse}
Given a span of internal functors, 
\begin{equation*}
\begin{tikzcd}[column sep = small]
& X
\arrow[ld, "h"']
\arrow[rd, "g"]
& \\
B 
& & A
\end{tikzcd}
\end{equation*}
with left leg a discrete opfibration and 
right leg an isomorphism-on-objects functor, 
there is an internal cofunctor $B \nrightarrow A$. 
\end{proposition}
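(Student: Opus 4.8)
The plan is to reverse the construction of Proposition~\ref{prop:cofunctor}, extracting from the span $B \xleftarrow{h} X \xrightarrow{g} A$ the data $(\varphi_0, \varphi_1)$ of an internal cofunctor $\varphi \colon B \nrightarrow A$ and verifying the axioms of Definition~\ref{defn:internalcofun}. First I would define the object component: since $g \colon X \to A$ is isomorphism-on-objects, $g_0 \colon X_0 \to A_0$ is an isomorphism, and I set $\varphi_0 = h_0 \circ g_0^{-1} \colon A_0 \to B_0$. Next, because $h \colon X \to B$ is an internal discrete opfibration, the square
\begin{equation*}
\begin{tikzcd}[row sep = small, column sep = small]
& X_{1}
\arrow[ld, "d_{0}"']
\arrow[rd, "h_{1}"]
& \\
X_{0}
\arrow[rd, "h_{0}"']
& & B_{1}
\arrow[ld, "d_{0}"]
\\
& B_{0} &
\end{tikzcd}
\end{equation*}
is a pullback, so $X_1 \cong X_0 \times_{B_0} B_1$, and transporting along $g_0^{-1}$ gives an isomorphism $\Lambda_1 = A_0 \times_{B_0} B_1 \xrightarrow{\sim} X_1$. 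I then define $\varphi_1 \colon \Lambda_1 \to A_1$ to be $g_1$ precomposed with this isomorphism. The pullback $\Lambda_2$ behaves analogously: iterating the discrete opfibration property (as in the pasting computation in the proof of Lemma~\ref{lem:cofunctor}) identifies $X_2 \cong \Lambda_2$ compatibly, so one obtains $\varphi_2 \colon \Lambda_2 \to A_2$ as $g_2$ transported across.

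The bulk of the verification is then to check that the diagrams \eqref{eqn:cofundomcod}, \eqref{eqn:cofunidcomp} defining an internal cofunctor hold. Each of these should follow by combining three facts: that $g$ and $h$ are internal functors (so they commute with $d_0, d_1, i_0$, and the composition maps $d_1 \colon X_2 \to X_1$), that $g_0$ is an isomorphism, and that the transport isomorphisms $\Lambda_k \xrightarrow{\sim} X_k$ were built from the very pullback squares witnessing $h$ as a discrete opfibration. Concretely, the domain compatibility $d_0 \varphi_1 = (\text{the evident }\Lambda_1 \to A_0)$ comes from $d_0 g_1 = g_0 d_0$ together with the identification of the left leg of the pullback; the codomain compatibility (the right half of \eqref{eqn:cofundomcod}, involving $p_1 = d_1 \varphi_1$ and $\phibar_1$) comes from $h$ being a functor, i.e. $d_1 h_1 = h_0 d_1$, matched against the right projection $\phibar_1 \colon \Lambda_1 \to B_1$. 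The unit and composition axioms \eqref{eqn:cofunidcomp} likewise reduce to the unit and composition compatibilities of the functor $g$, once one checks that the transport isomorphisms send $i_0 \colon A_0 \to \Lambda_1$ to $i_0 \colon X_0 \to X_1$ (up to $g_0$) and $d_1 \colon \Lambda_2 \to \Lambda_1$ to $d_1 \colon X_2 \to X_1$; both of these are again consequences of $h$ being a discrete opfibration and the uniqueness of maps into a pullback.

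The main obstacle, and the step deserving the most care, is establishing that the induced isomorphisms $\Lambda_1 \cong X_1$ and $\Lambda_2 \cong X_2$ are coherent with \emph{all} the structure maps simultaneously — in particular that the composition map $d_1 \colon \Lambda_2 \to \Lambda_1$ defined via the universal property in Definition~\ref{defn:internalcofun} really is identified with $d_1 \colon X_2 \to X_1$ coming from the internal category $X$. This is where the iterated pullback-pasting argument (of the kind performed explicitly in the proof of Lemma~\ref{lem:cofunctor}) is needed, using that $X$ itself satisfies \eqref{eqn:cat-pullbacks}. Once the coherence of the transport isomorphisms is in hand, every remaining axiom is a formal consequence of functoriality of $g$ and $h$, and since these checks are routine diagram chases I would suppress their details, as was done in Lemma~\ref{lem:cofunctor}. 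Finally I would remark that this construction is inverse, up to isomorphism of spans, to the one in Proposition~\ref{prop:cofunctor}, so that cofunctors and such isomorphism classes of spans are in bijection.
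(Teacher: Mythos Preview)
Your proposal is correct and follows essentially the same approach as the paper: define $\varphi_0 = h_0 g_0^{-1}$, use the discrete opfibration property of $h$ to obtain a map $\Lambda_1 \to X_1$ (which is indeed an isomorphism, as you note, since it is a pullback of the isomorphism $g_0^{-1}$), and set $\varphi_1$ to be $g_1$ composed with this map. The paper suppresses all verification of the cofunctor axioms; you have spelled out considerably more of what that verification entails, including the coherence of the transport isomorphisms with the structure maps, which is entirely appropriate and does not diverge from the paper's intent.
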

\begin{proof}
Since $g \colon X \rightarrow A$ is isomorphism-on-objects, 
define the composite morphism:
\begin{equation*}
\begin{tikzcd}[column sep = small, row sep = small]
A_{0}
\arrow[rd, "g_{0}^{-1}"']
\arrow[rr, "\varphi_{0}"]
& & B_{0}
\\
& X_{0}
\arrow[ru, "h_{0}"']
&
\end{tikzcd}
\end{equation*}
This morphism may be used to define the pullback $\Lambda_{1}$ as in 
\eqref{eqn:cofun-pullbacks}. 
Since $h \colon X \rightarrow B$ is an internal discrete opfibration, 
there exists a morphism 
$g_{0}^{-1} \times 1 \colon \Lambda_{1} \rightarrow X_{1}$ 
using the universal property of the pullback, which may be used to 
define the composite morphism:
\begin{equation*}
\begin{tikzcd}[column sep = small, row sep = small]
\Lambda_{1}
\arrow[rd, "g_{0}^{-1} \times 1"']
\arrow[rr, "\varphi_{1}"]
& & A_{1}
\\
& X_{1}
\arrow[ru, "g_{1}"']
&
\end{tikzcd}
\end{equation*}
The corresponding internal cofunctor $\varphi \colon B \nrightarrow A$ 
is constructed from the pair of morphisms $(\varphi_{0}, \varphi_{1})$. 
We suppress the details required to show that the axioms for a cofunctor 
are satisfied. 
\end{proof}

Representing internal cofunctors as spans of internal 
functors also shows how the apparently complicated composition in 
$\COF(\E)$ given by \eqref{eqn:compositecofun} may be understood simply
as span composition:
\begin{equation}
\label{eqn:cofunctorspancomp}
\begin{tikzcd}[row sep = small, column sep = small]
& &[-1em] \Omega \times_{B} \Lambda
\arrow[ld]
\arrow[rd]
\arrow[dd, phantom, "\lrcorner" rotate = -45, very near start]
&[-1em] & \\
& \Omega 
\arrow[ld, "\overline{\gamma}"']
\arrow[rd, "\gamma"]
& & \Lambda
\arrow[ld, "\phibar"']
\arrow[rd, "\varphi"]
& \\
C
& & B
& & A
\end{tikzcd}
\end{equation}
This composition is well-defined by Lemma~\ref{lem:important},
and using Proposition~\ref{prop:converse} it may be shown that the 
construction $\COF(\E) \rightarrow \SPAN_{\mathsf{iso}}(\CAT(\E))$ is 
functorial.

\section{Internal lenses}
\label{sec:internallenses}

This section introduces internal lenses between internal 
categories, in a fixed category $\E$ with pullbacks, generalising 
the concept of a delta lens from \cite{DXC11}. 
Internal lenses may be represented as commutative diagrams of internal 
functors, and are closed under composition and stable under pullback. 

\begin{defn}
\label{defn:internallens}
Let $A$ and $B$ be internal categories. 
An \emph{internal lens} $(f, \varphi) \colon A \br B$ 
consists of an internal functor $f \colon A \rightarrow B$ and 
an internal cofunctor $\varphi \colon B \nrightarrow A$ satisfying 
the commutative diagrams:
\begin{equation}
\label{eqn:internallens}
\begin{tikzcd}[column sep = small, row sep = small]
& A_{0}
\arrow[ld, equal]
\arrow[rd, "\varphi_{0}"]
& \\
A_{0}
\arrow[rr, "f_{0}"']
& & B_{0}
\end{tikzcd}
\qquad \qquad
\begin{tikzcd}[column sep = small, row sep = small]
& \Lambda_{1}
\arrow[ld, "\varphi_{1}"']
\arrow[rd, "\phibar_{1}"]
& \\
A_{1}
\arrow[rr, "f_{1}"']
& & B_{1}
\end{tikzcd}
\end{equation}
\end{defn}

\begin{remark}
\label{rem:lenscat}
Let $\LENS(\E)$ denote the category of internal categories and internal 
lenses in a fixed category $\E$ with pullbacks. 
Given internal lenses $(f, \varphi) \colon A \br B$ and
$(g, \gamma) \colon B \br C$, the composite internal lens
$(g \circ f, \varphi \circ \gamma) \colon A \br C$ is given by the 
composite internal functor $g \circ f \colon A \rightarrow C$ 
and the composite internal cofunctor 
$\varphi \circ \gamma \colon C \nrightarrow A$. 
One may check that \eqref{eqn:internallens} holds. 
\end{remark}

\begin{example}[$\E = \Set$, see \cite{DXC11}]
\label{ex:deltalens}
A \emph{delta lens} $(f, \varphi) \colon A \br B$ between small 
categories consists of a functor $f \colon A \rightarrow B$ together 
with a function, 
\[
	(a \in A, u \colon f a \rightarrow b \in B) 
	\qquad \longmapsto \qquad
	\varphi(a, u) \colon a \rightarrow p(a, u)
\]
satisfying the axioms: 
\begin{enumerate}[(1), itemsep=+1ex]
\item $f \varphi(a, u)= u$
\item $\varphi(a, 1_{fa}) = 1_{a}$
\item $\varphi(a, v \circ u) = \varphi(p(a, u), v) \circ \varphi(a, u)$
\end{enumerate}
A delta lens is exactly an internal lens in $\Set$, and may be 
understood as functor equipped with a functorial \emph{lifting} 
of morphisms from the codomain to the domain. 
Unlike a split opfibration however, there is no requirement for 
these lifts to be opcartesian. 
Based on the diagram for a cofunctor in Example~\ref{ex:cofunctor}, 
the following illustrates the behaviour of a delta lens: 
\begin{equation*}
\begin{tikzcd}
A
\arrow[d, "f", shift left, harpoon]
& a
\arrow[d, phantom, "\vdots"]
\arrow[r, "{\varphi(a, u)}"]
& p(a, u) 
\arrow[d, phantom, "\vdots"]
\\
B
\arrow[u, "\varphi", shift left, harpoon]
& f a
\arrow[r, "u"]
& b
\end{tikzcd}
\end{equation*}
\end{example}

\begin{example}[See \cite{FGMPS07, JRW10}] 
Let $\E$ have finite limits. 
A \emph{very well-behaved lens} consists of a pair of morphisms in $\E$, 
\[
	f \colon A \longrightarrow B
	\qquad \qquad
	p \colon A \times B \longrightarrow A
\]
satisfying three commutative diagrams: 
\begin{equation*}
\begin{tikzcd}[column sep = small]
& A \times B 
\arrow[ld, "p"']
\arrow[rd, "\pi_{1}"] & 
\\
A 
\arrow[rr, "f"']
& & B
\end{tikzcd}
\qquad
\begin{tikzcd}[column sep = small]
& A \times B 
\arrow[from=ld, "{\langle 1_{A},\, f \rangle}"]
\arrow[rd, "p"] & 
\\
A 
\arrow[rr, "1_{A}"']
& & A
\end{tikzcd}
\qquad
\begin{tikzcd}
A \times B \times B
\arrow[r, "p \times 1_{B}"]
\arrow[d, "\pi_{0, 2}"']
& A \times B
\arrow[d, "p"]
\\
A \times B 
\arrow[r, "p"']
& A
\end{tikzcd}
\end{equation*}
A very well-behaved lens is exactly an internal lens between internal 
codiscrete categories.
It consists of the internal functor between the codiscrete 
categories on $A$ and $B$, and the internal cofunctor where 
$\Lambda_{1} = A \times B$ and  
$\varphi_{1} 
= \langle \pi_{0}, p \rangle \colon A \times B \rightarrow A \times A$. 
The special case of a delta lens between small codiscrete categories was 
shown in \cite{JR16}. 
It was shown in \cite{JRW10} that the morphism 
$f \colon A \rightarrow B$ in a very well-behaved lens must be 
equivalent to a product projection with
$A \cong C \times B$.
\end{example}

\begin{example}
Let $\E$ have finite limits, and let $A$ and $B$ be internal monoids, 
considered as internal categories whose object of objects is terminal. 
An internal lens $(f, \varphi) \colon A \br B$ is exactly a monoid
homomorphism $f \colon A \rightarrow B$ with a chosen right inverse 
$\varphi \colon B \rightarrow A$.
For arbitrary internal categories, any isomorphism-on-objects internal
functor with a chosen right inverse forms an internal lens. 
\end{example}

\begin{example}
An internal lens $(f, \varphi) \colon A \br B$ is an internal discrete
opfibration if and only if 
$\varphi_{1} \colon \Lambda_{1} \rightarrow A_{1}$ is an isomorphism 
(see Remark~\ref{rem:discopf}). 
Internal lenses may be understood as being more general than internal 
discrete opfibrations while being less general than internal cofunctors. 
\end{example}

The next two results follow almost immediately from the corresponding 
results for cofunctors (Proposition~\ref{prop:cofunctor} and 
Proposition~\ref{prop:converse}) and the definition of an internal lens. 

\begin{proposition}
\label{prop:internallens}
Given an internal lens $(f, \varphi) \colon A \br B$ there is a 
a commutative diagram of internal functors, 
\begin{equation}
\label{eqn:lensrep}
\begin{tikzcd}[column sep = small]
& \Lambda
\arrow[ld, "\varphi"']
\arrow[rd, "\phibar"]
& \\
A 
\arrow[rr, "f"']
& & B
\end{tikzcd}
\end{equation}
where $\varphi$ is a faithful, identity-on-objects functor and 
$\phibar$ is a discrete opfibration. 
\end{proposition}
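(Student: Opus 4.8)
The plan is to build the diagram \eqref{eqn:lensrep} by combining the span representation of the underlying internal cofunctor $\varphi \colon B \nrightarrow A$, supplied by Proposition~\ref{prop:cofunctor}, with the commutative triangles \eqref{eqn:internallens} that constitute the internal lens structure. First I would invoke Proposition~\ref{prop:cofunctor} to obtain the internal category $\Lambda$ (built in Lemma~\ref{lem:cofunctor}) together with the span whose left leg $\varphi \colon \Lambda \rightarrow A$ is identity-on-objects and whose right leg $\phibar \colon \Lambda \rightarrow B$ is an internal discrete opfibration. It then remains to verify two things: that the triangle \eqref{eqn:lensrep} commutes, i.e.\ $f \circ \varphi = \phibar$ as internal functors $\Lambda \rightarrow B$; and that $\varphi \colon \Lambda \rightarrow A$ is moreover faithful, not merely identity-on-objects.

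For commutativity, since internal functors are determined by their object and morphism components, I would check the equalities componentwise. On objects, $\varphi \colon \Lambda \rightarrow A$ has component $1_{A_{0}}$ and $f$ has component $f_{0}$, so $f_{0} \circ 1_{A_{0}} = f_{0}$; on the other side $\phibar$ has object component $\varphi_{0}$, and the left-hand triangle of \eqref{eqn:internallens} is precisely $f_{0} = \varphi_{0}$. On morphisms, the composite $f \circ \varphi$ has component $f_{1} \circ \varphi_{1} \colon \Lambda_{1} \rightarrow B_{1}$, while $\phibar$ has morphism component $\phibar_{1} \colon \Lambda_{1} \rightarrow B_{1}$; the right-hand triangle of \eqref{eqn:internallens} gives exactly $f_{1} \circ \varphi_{1} = \phibar_{1}$. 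Hence the triangle commutes.

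For faithfulness of $\varphi \colon \Lambda \rightarrow A$, recall that an internal functor is faithful when the morphism component is a monomorphism relative to the domain/codomain pair — concretely here, since $\varphi$ is identity-on-objects, faithfulness amounts to $\langle d_{0}, p_{1}, \varphi_{1}\rangle \colon \Lambda_{1} \rightarrow A_{1}$ being such that $\varphi_{1}$ together with the pair of its source/target data is monic; equivalently, that the induced map $\Lambda_{1} \rightarrow A_{1} \times_{A_{0}\times A_{0}} A_{1}$ (or simply that $\varphi_{1}$ is monic into $A_{1}$ compatibly with projections) is a monomorphism. The key observation is that $\varphi_{1} \colon \Lambda_{1} \rightarrow A_{1}$ factors the first projection of the defining pullback $\Lambda_{1} = A_{0} \times_{B_{0}} B_{1}$: indeed $d_{0} = d_{0} \circ \varphi_{1}$ by \eqref{eqn:cofundomcod}, and $\phibar_{1} = f_{1}\circ\varphi_{1}$ by \eqref{eqn:internallens}; so the pair $\langle d_{0}\varphi_{1},\, f_{1}\varphi_{1}\rangle \colon \Lambda_{1} \rightarrow A_{0}\times_{B_{0}} B_{1} = \Lambda_{1}$ equals $\langle d_{0}, \phibar_{1}\rangle = 1_{\Lambda_{1}}$. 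Thus $\varphi_{1}$ is a split monomorphism, with retraction $\langle d_{0}, f_{1}\rangle \colon A_{0}\times_{B_0}\!\big(\text{image}\big) \to \Lambda_1$ — more precisely, the composite $\Lambda_{1} \xrightarrow{\varphi_{1}} A_{1} \xrightarrow{\langle d_{0}, f_{1}\rangle} A_{0}\times_{B_{0}} B_{1} = \Lambda_{1}$ is the identity, exhibiting $\varphi_{1}$ as a monomorphism, which gives faithfulness of $\varphi$.

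The main obstacle I anticipate is pinning down the correct internal formulation of ``faithful'' and confirming that split-monicity of $\varphi_{1}$ (compatibly with the source and target maps $d_{0}$ and $p_{1} = d_{1}\varphi_{1}$ on $\Lambda_{1}$, and $d_{0}$, $d_{1}$ on $A_{1}$) genuinely yields faithfulness in the internal sense rather than some weaker condition; one must check that the relevant comparison morphism into the pullback $A_{1}\times_{A_{0}\times A_{0}} A_{1}$ is monic, which follows because a split mono is a mono and monos compose and cancel appropriately. Everything else is a routine unwinding of \eqref{eqn:internallens}, the pullback \eqref{eqn:cofun-pullbacks}, and the commuting squares \eqref{eqn:cofundomcod} already established for the underlying cofunctor, so I would state those verifications briefly and defer the longer diagram chases, in keeping with the style of Proposition~\ref{prop:cofunctor}.
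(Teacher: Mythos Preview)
Your proposal is correct and follows exactly the approach the paper intends: the paper does not give a separate proof but remarks that the result ``follows almost immediately from the corresponding results for cofunctors (Proposition~\ref{prop:cofunctor}) and the definition of an internal lens,'' which is precisely your strategy of invoking the cofunctor span and then reading off commutativity from \eqref{eqn:internallens}. Your additional argument for faithfulness---showing $\langle d_{0}, f_{1}\rangle \circ \varphi_{1} = 1_{\Lambda_{1}}$ so that $\varphi_{1}$ is split monic---is the natural way to supply the detail the paper leaves implicit, and it is sound since for an identity-on-objects internal functor faithfulness reduces to the morphism component being a monomorphism.
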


\begin{proposition}
\label{prop:lensconverse}
Given a commutative diagram of internal functors, 
\begin{equation*}
\begin{tikzcd}[column sep = small]
& X
\arrow[ld, "g"']
\arrow[rd, "h"]
& \\
A 
\arrow[rr, "f"']
& & B
\end{tikzcd}
\end{equation*}
where $g$ is an isomorphism-on-objects functor and $h$ is a discrete 
opfibration, there is an internal lens $A \br B$. 
\end{proposition}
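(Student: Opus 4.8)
The plan is to apply Proposition~\ref{prop:converse} to the given span $B \xleftarrow{h} X \xrightarrow{g} A$ to obtain an internal cofunctor $\varphi \colon B \nrightarrow A$, and then to verify that the pair $(f, \varphi)$ satisfies the two commutative diagrams \eqref{eqn:internallens} defining an internal lens. Since Proposition~\ref{prop:converse} already guarantees that $\varphi$ is a well-defined internal cofunctor, the only remaining work is to check these two compatibility conditions, and both reduce to the hypothesis $f \circ g = h$.

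First I would recall the explicit construction from the proof of Proposition~\ref{prop:converse}: the object component is $\varphi_{0} = h_{0} \circ g_{0}^{-1} \colon A_{0} \to B_{0}$, the pullback $\Lambda_{1} = A_{0} \times_{B_{0}} B_{1}$ is formed along $\varphi_{0}$ and $d_{0}$, and the morphism component is $\varphi_{1} = g_{1} \circ (g_{0}^{-1} \times 1) \colon \Lambda_{1} \to A_{1}$, where $g_{0}^{-1} \times 1 \colon \Lambda_{1} \to X_{1}$ is induced by the universal property of the pullback $X_{1} \cong X_{0} \times_{B_{0}} B_{1}$ (available since $h$ is a discrete opfibration). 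This induced morphism is characterised by $d_{0} \circ (g_{0}^{-1} \times 1) = g_{0}^{-1} \circ d_{0}$ together with $h_{1} \circ (g_{0}^{-1} \times 1) = \phibar_{1}$, where $\phibar_{1} \colon \Lambda_{1} \to B_{1}$ denotes the pullback projection appearing in \eqref{eqn:cofun-pullbacks}.

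For the left-hand diagram of \eqref{eqn:internallens} I need $f_{0} = \varphi_{0}$, which follows immediately from $f_{0} \circ g_{0} = h_{0}$, giving $f_{0} = h_{0} \circ g_{0}^{-1} = \varphi_{0}$ since $g_{0}$ is an isomorphism. For the right-hand diagram I need $f_{1} \circ \varphi_{1} = \phibar_{1}$; computing, $f_{1} \circ \varphi_{1} = f_{1} \circ g_{1} \circ (g_{0}^{-1} \times 1) = h_{1} \circ (g_{0}^{-1} \times 1) = \phibar_{1}$, using $f_{1} \circ g_{1} = h_{1}$ and the characterisation of $g_{0}^{-1} \times 1$ recalled above. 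Hence $(f, \varphi)$ is an internal lens $A \br B$.

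The argument presents no real obstacle; the only point requiring care is keeping track of the universal properties defining the pullback $\Lambda_{1}$ and the induced morphism $g_{0}^{-1} \times 1$, which have already been spelled out in the proof of Proposition~\ref{prop:converse}. As with the remark following Proposition~\ref{prop:cofunctor}, one could additionally note that this construction is inverse, up to isomorphism of spans, to the representation of Proposition~\ref{prop:internallens}, although this is not needed for the statement as given.
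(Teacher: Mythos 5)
Your proposal is correct and matches the paper's intended argument: the paper gives no explicit proof, remarking only that the result ``follows almost immediately'' from Proposition~\ref{prop:converse} and the definition of an internal lens, which is precisely the route you take. Your verification that both diagrams in \eqref{eqn:internallens} reduce to $f_{0} \circ g_{0} = h_{0}$ and $f_{1} \circ g_{1} = h_{1}$, via the characterisation of $g_{0}^{-1} \times 1$ by the universal property of the pullback, supplies exactly the details the paper suppresses.
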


The above propositions allow one to identify an internal lens with a 
diagram \eqref{eqn:lensrep}, and prove results concerning internal 
lenses using the properties of isomorphisms-on-objects internal functors
and internal discrete opfibrations stated in Lemma~\ref{lem:important}. 
In Theorem~\ref{thm:main}, internal split opfibrations will be 
characterised in this way. 

\begin{remark}
\label{rem:lensrepcomp}
Given internal lenses $(f, \varphi) \colon A \br B$ and 
$(g, \gamma) \colon B \br C$, their composite may be computed by 
constructing pullbacks in $\CAT(\E)$: 
\begin{equation*}
\begin{tikzcd}[row sep = small, column sep = small]
& &[-1em] \Lambda \times_{B} \Omega
\arrow[ld]
\arrow[rd]
\arrow[dd, phantom, "\lrcorner" rotate = -45, very near start]
&[-1em] & \\
& \Lambda 
\arrow[ld, "\varphi"']
\arrow[rd, "\phibar"]
& & \Omega
\arrow[ld, "\gamma"']
\arrow[rd, "\overline{\gamma}"]
& \\
A
\arrow[rr, "f"']
& & B
\arrow[rr, "g"']
& & C
\end{tikzcd}
\end{equation*}
\end{remark}

\begin{proposition}[Stability under pullback]
\label{prop:lenspullback}
Given a pullback square of internal functors,
\begin{equation*}
\begin{tikzcd}
A \times_{B} C
\arrow[r, "\pi_{1}"]
\arrow[d, "\pi_{0}"']
\arrow[rd, phantom, "\lrcorner" very near start]
& C
\arrow[d, "g"]
\\
A 
\arrow[r, "f"']
& B
\end{tikzcd}
\end{equation*}
if $(f, \varphi) \colon A \br B$ is an internal lens, 
then $\pi_{1}$ has the structure of an internal lens. 
\end{proposition}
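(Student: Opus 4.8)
The plan is to exploit the span representation of internal lenses provided by Proposition~\ref{prop:internallens} and Proposition~\ref{prop:lensconverse}, together with the closure properties in Lemma~\ref{lem:important}, rather than manipulating the cofunctor axioms directly. First I would use Proposition~\ref{prop:internallens} to present the given internal lens $(f,\varphi)\colon A \br B$ as a commutative triangle
\begin{equation*}
\begin{tikzcd}[column sep = small]
& \Lambda
\arrow[ld, "\varphi"']
\arrow[rd, "\phibar"]
& \\
A
\arrow[rr, "f"']
& & B
\end{tikzcd}
\end{equation*}
with $\varphi$ identity-on-objects and $\phibar$ an internal discrete opfibration. Then I would form the pullback of $\phibar \colon \Lambda \rightarrow B$ along $g \colon C \rightarrow B$ in $\CAT(\E)$, producing $\Lambda \times_B C$ together with projections to $\Lambda$ and to $C$; recall from Remark~\ref{rem:CatE} that these pullbacks exist and are computed component-wise.

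The key point is then to verify that this pullback fits into a commutative triangle exhibiting $\pi_1 \colon A\times_B C \rightarrow C$ as an internal lens, so that Proposition~\ref{prop:lensconverse} applies. Concretely, I would check that the composite $\Lambda \times_B C \rightarrow \Lambda \xrightarrow{\varphi} A \rightarrow A\times_B C$ (the last map induced by the universal property of the pullback $A\times_B C$, using $f\varphi = \phibar$ and the two projections out of $\Lambda\times_B C$) is isomorphism-on-objects, and that $\Lambda\times_B C \rightarrow C$ is an internal discrete opfibration. Both of these are immediate from Lemma~\ref{lem:important}: the class of isomorphism-on-objects functors and the class of internal discrete opfibrations are each stable under pullback by internal functors (item (iii)), and $\varphi$ lies in the first class while $\phibar$ lies in the second. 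One then only has to confirm the relevant triangle commutes, which follows from the universal properties of the two pullbacks involved, and invoke Proposition~\ref{prop:lensconverse}.

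The main obstacle I anticipate is bookkeeping rather than conceptual: one must be careful that the map $\Lambda\times_B C \rightarrow A\times_B C$ used as the ``identity-on-objects'' leg is the correct one, i.e. that the triangle over $C$ really does commute with $\pi_1$, and that after identifying object-components the functor is not merely isomorphism-on-objects but can be normalised to identity-on-objects as in Proposition~\ref{prop:internallens}. Since pullbacks in $\CAT(\E)$ are component-wise, the object component of $\Lambda\times_B C$ is $\Lambda_0 \times_{B_0} C_0 = A_0 \times_{B_0} C_0$ (as $\phibar$ and $\varphi$ are identity-on-objects, $\Lambda_0 = A_0$), which is exactly the object component of $A\times_B C$; so the induced map is genuinely identity-on-objects, and the obstruction dissolves. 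The remaining verifications are routine diagram chases that I would suppress, as is done for the analogous converse results earlier in this section.
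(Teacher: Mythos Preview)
Your proposal is correct and follows essentially the same approach as the paper: represent the lens via Proposition~\ref{prop:internallens}, form $\Lambda \times_{B} C$, observe that the induced map $\varphi \times 1 \colon \Lambda \times_{B} C \to A \times_{B} C$ is identity-on-objects and the projection $\Lambda \times_{B} C \to C$ is a discrete opfibration by Lemma~\ref{lem:important}(iii), then invoke Proposition~\ref{prop:lensconverse}. The only cosmetic issue is your phrase ``the composite $\Lambda \times_B C \to \Lambda \xrightarrow{\varphi} A \to A\times_B C$'', which reads as though there were a map $A \to A\times_B C$; your parenthetical makes clear you mean the map into $A\times_B C$ induced by the pair $(\varphi \circ \pi_0,\, \pi_1)$, and the paper writes this simply as $\varphi \times 1$.
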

\begin{proof}
Given that $(f, \varphi)$ is an internal lens, 
using Proposition~\ref{prop:internallens} we may construct the following
diagram of internal functors:
\begin{equation*}
\begin{tikzcd}
\Lambda \times_{B} C
\arrow[d, "\pi_{0}"']
\arrow[r, dashed, "\varphi \times 1"]
\arrow[rr, bend left, "\pi_{1}"]
\arrow[rd, phantom, "\lrcorner" very near start]
&
A \times_{B} C
\arrow[r, "\pi_{1}"]
\arrow[d, "\pi_{0}"']
\arrow[rd, phantom, "\lrcorner" very near start]
& C
\arrow[d, "g"]
\\
\Lambda
\arrow[r, "\varphi"']
\arrow[rr, bend right, "\phibar"']
&
A 
\arrow[r, "f"']
& B
\end{tikzcd}
\end{equation*}
By Lemma~\ref{lem:important}, both identity-on-objects functors and 
discrete opfibrations are stable under pullback along internal functors, 
thus $\varphi \times 1 \colon 
\Lambda \times_{B} C \rightarrow A \times_{B} C$ 
is identity-on-objects and 
$\pi_{1} \colon \Lambda \times_{B} C \rightarrow C$ is a discrete 
opfibration. 
By applying Proposition~\ref{prop:lensconverse}, the internal functor 
$\pi_{1} \colon A \times_{B} C \rightarrow C$ has the structure of an
internal lens. 
\end{proof}

\section{Internal split opfibrations and d\'{e}calage}
\label{sec:decalage}

This section defines an internal split opfibration, in a fixed category 
$\E$ with pullbacks, as an internal lens with a certain property. 
This definition is shown to be equivalent to a previous 
characterisation of split opfibrations in \cite{AU17}, and 
the main theorem will further characterise internal split opfibrations 
using the right d\'{e}calage construction. 

To prepare for the main definition, 
consider an internal lens $(f, \varphi) \colon A \br B$ and construct
the following pullback: 
\begin{equation}
\label{eqn:kpullback}
\begin{tikzcd}[row sep = small, column sep = tiny]
& \Lambda_{1} \times_{A_{1}} A_{2}
\arrow[ld, "\pi_{0}"']
\arrow[rd, "\pi_{1}"]
\arrow[dd, phantom, "\lrcorner" rotate = -45, very near start]
& \\
\Lambda_{1}
\arrow[rd, "\varphi_{1}"']
& & A_{2}
\arrow[ld, "d_{0}"]
\\
& A_{1} &
\end{tikzcd}
\end{equation}

The projection 
$\pi_{1} \colon \Lambda_{1} \times_{A_{1}} A_{2} \rightarrow A_{2}$ 
may be pasted with the commutative diagram for the composition map 
in \eqref{eqn:funidcomp} to obtain:
\begin{equation}
\label{eqn:important}
\begin{tikzcd}[row sep = small, column sep = tiny]
& \Lambda_{1} \times_{A_{1}} A_{2}
\arrow[dd, "\pi_{1}"]
\arrow[lddd, bend right, "d_{1}\pi_{1}"']
\arrow[rddd, bend left, "f_{2}\pi_{1}"]
& \\
& & \\
& A_{2}
\arrow[ld, "d_{1}"']
\arrow[rd, "f_{2}"]
& \\
A_{1}
\arrow[rd, "f_{1}"']
& & B_{2}
\arrow[ld, "d_{1}"]
\\
& B_{1} &
\end{tikzcd}
\qquad = \qquad
\begin{tikzcd}[row sep = small, column sep = tiny]
& \Lambda_{1} \times_{A_{1}} A_{2}
\arrow[ld, "d_{1}\pi_{1}"']
\arrow[rd, "f_{2}\pi_{1}"]
& \\
A_{1}
\arrow[rd, "f_{1}"']
& & B_{2}
\arrow[ld, "d_{1}"]
\\
& B_{1} &
\end{tikzcd}
\end{equation}
To reiterate, the diagram \eqref{eqn:important} commutes for 
any internal lens $(f, \varphi) \colon A \br B$.

\begin{defn}
\label{defn:internalsplitopf}
An \emph{internal split opfibration} is an internal lens 
$(f, \varphi) \colon A \br B$
such that the following commutative diagram,
constructed in \eqref{eqn:important}, is a pullback:
\begin{equation*}
\begin{tikzcd}[row sep = small, column sep = tiny]
& \Lambda_{1} \times_{A_{1}} A_{2}
\arrow[ld, "d_{1}\pi_{1}"']
\arrow[rd, "f_{2}\pi_{1}"]
& \\
A_{1}
\arrow[rd, "f_{1}"']
& & B_{2}
\arrow[ld, "d_{1}"]
\\
& B_{1} &
\end{tikzcd}
\end{equation*}
\end{defn}

The above definition mirrors Definition~\ref{defn:internaldopf}, which 
states that an internal discrete opfibration is an internal functor 
$f \colon A \rightarrow B$ such that the commutative diagram 
\eqref{eqn:discopf} is a pullback. 

While an internal lens may have the \emph{property} of being an internal
split opfibration, this property may also be specified by 
\emph{unique structure}, as was done in Remark~\ref{rem:discopf} for 
internal discrete opfibrations. 
This characterisation in terms of unique structure will be described 
Proposition~\ref{prop:internalsplitopf} and allows for a straightforward 
verification that the above definition correctly generalises the 
$\E = \Set$ case. 

There is also the question of proving that internal categories and 
internal split opfibrations form a category, denoted $\SOPF(\E)$, and 
also that internal split opfibrations are stable under pullback. 
An answer is found through a natural characterisation of internal 
split opfibrations via the right d\'{e}calage comonad in 
Theorem~\ref{thm:main},
which utilises the equivalent representation of an internal lens given
by \eqref{eqn:lensrep}. 
This characterisation also provides motivation for \emph{why} the 
commuting diagram \eqref{eqn:important} is used to define an internal
split opfibration. 

Finally, one might protest that the real benefit in defining internal 
discrete opfibrations using a pullback condition is that the object of 
morphisms of the domain is endowed with the corresponding universal 
property, which is not apparent for internal split opfibrations as given
in Definition~\ref{defn:internalsplitopf}.
This is addressed in the following section, where it is shown that an 
internal split opfibration may also be characterised as an internal 
lens with a particular strict factorisation system on the domain, 
which endows the object of morphisms with a suitable universal property.  

First, to unpack the definition of an internal split opfibration 
consider the pullback:
\begin{equation}
\label{eqn:fpullback}
\begin{tikzcd}[row sep = small, column sep = tiny]
& A_{1} \times_{B_{1}} B_{2}
\arrow[ld, "\pi_{0}"']
\arrow[rd, "\pi_{1}"]
\arrow[dd, phantom, "\lrcorner" rotate = -45, very near start]
& \\
A_{1}
\arrow[rd, "f_{1}"']
& & B_{2}
\arrow[ld, "d_{1}"]
\\
& B_{1} &
\end{tikzcd}
\end{equation}
The following result characterises internal split opfibrations in 
terms of unique structure on an internal lens.
This characterisation is essentially the same as given in \cite{AU17}, 
except formulated using internal category theory rather than 
directed containers. 

\begin{proposition}
\label{prop:internalsplitopf}
An internal lens $(f, \varphi) \colon A \br B$ is an internal split 
opfibration if and only if there exists a morphism, 
\[
	\psi \colon A_{1} \times_{B_{1}} B_{2} \longrightarrow A_{1}
\]
satisfying the following four commutative diagrams: 
\begin{itemize}[$\diamond$]
\item This diagram specifies a ``domain condition'':
\begin{equation}
\label{eqn:sopfdom}
\begin{tikzcd}
A_{1} \times_{B_{1}} B_{2}
\arrow[rr, "\psi"]
\arrow[d, "d_{0} \times d_{0}"']
& & A_{1}
\arrow[d, "d_{0}"]
\\
\Lambda_{1}
\arrow[r, "\varphi_{1}"']
& A_{1}
\arrow[r, "d_{1}"']
& A_{0}
\end{tikzcd}
\end{equation}
\item This diagram specifies a ``uniqueness condition'': 
\begin{equation}
\label{eqn:sopfunique}
\begin{tikzcd}
\Lambda_{1} \times_{A_{1}} A_{2}
\arrow[r, "\pi_{1}"]
\arrow[d, "\pi_{1}"']
&[+0.5em] A_{2}
\arrow[r, "{\langle d_{1},\, f_{2} \rangle}"]
&[+0.5em] A_{1} \times_{B_{1}} B_{2}
\arrow[d, "\psi"]
\\
A_{2}
\arrow[rr, "d_{2}"']
& & A_{1}
\end{tikzcd}
\end{equation}
\item This diagram specifies a ``lifting condition'': 
\begin{equation}
\label{eqn:sopflift}
\begin{tikzcd}
A_{1} \times_{B_{1}} B_{2}
\arrow[r, "\psi"]
\arrow[d, "\pi_{1}"']
& A_{1}
\arrow[d, "f_{1}"]
\\
B_{2}
\arrow[r, "d_{2}"']
& B_{1}
\end{tikzcd}
\end{equation}
\item This diagram specifies a ``composition condition'':
\begin{equation}
\label{eqn:sopfcomp}
\begin{tikzcd}[column sep = tiny]
& A_{1} \times_{B_{1}} B_{2}
\arrow[ld, "\widehat{\psi}"']
\arrow[rd, "\pi_{0}"]
& \\
A_{2}
\arrow[rr, "d_{1}"']
& & A_{1}
\end{tikzcd}
\end{equation}
\end{itemize}
The pullback $\Lambda_{1} \times_{A_{1}} A_{2}$ is from 
\eqref{eqn:kpullback}, the pullback $A_{1} \times_{B_{1}} B_{2}$ is from
\eqref{eqn:fpullback}, and the morphism $\widehat{\psi}$ is defined 
using \eqref{eqn:sopfdom} via the universal property of the pullback: 
\begin{equation*}
\begin{tikzcd}[row sep = small, column sep = tiny]
& A_{1} \times_{B_{1}} B_{2}
\arrow[dd, dashed, "\widehat{\psi}"]
\arrow[ld, "d_{0} \times d_{0}"']
\arrow[rddd, "\psi", bend left]
& \\
\Lambda_{1}
\arrow[dd, "\varphi_{1}"']
& & \\[-1em]
& A_{2}
\arrow[ld, "d_{0}"']
\arrow[rd, "d_{2}"]
\arrow[dd, phantom, "\lrcorner" rotate = -45, very near start]
& \\
A_{1}
\arrow[rd, "d_{1}"']
& & A_{1}
\arrow[ld, "d_{0}"]
\\
& A_{0} & 
\end{tikzcd}
\end{equation*}
\end{proposition}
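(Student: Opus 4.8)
The plan is to prove the equivalence by constructing, in each direction, the relevant morphism from the other and checking that the pullback property of Definition~\ref{defn:internalsplitopf} translates precisely into the conjunction of the four commutative diagrams. The key observation is that the pullback $\Lambda_{1} \times_{A_{1}} A_{2}$ from \eqref{eqn:kpullback} and the pullback $A_{1} \times_{B_{1}} B_{2}$ from \eqref{eqn:fpullback} are both candidate ``sources'' for morphisms into $A_{1}$ (the object of chosen opcartesian lifts followed by a vertical map, resp.\ the data of a morphism of $A$ together with a composable pair in $B$), and that the map $\langle d_{1},\, f_{2} \rangle \colon A_{2} \rightarrow A_{1} \times_{B_{1}} B_{2}$ is exactly the comparison whose being an isomorphism onto the appropriate subobject encodes the split opfibration condition.

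First I would treat the forward direction. Suppose $(f, \varphi)$ is an internal split opfibration, so the square in Definition~\ref{defn:internalsplitopf} is a pullback. I would note that the morphism $\langle d_{1}\pi_{1},\, f_{2}\pi_{1} \rangle \colon \Lambda_{1} \times_{A_{1}} A_{2} \rightarrow A_{1} \times_{B_{1}} B_{2}$ (which exists by commutativity of \eqref{eqn:important}) is an isomorphism: indeed both objects, together with their maps to $A_{1}$ and $B_{2}$, are pullbacks of $f_{1}$ along $d_{1}$, one via $d_{1}\pi_{1}$ and $f_{2}\pi_{1}$ and the other by definition, and the comparison between them is precisely the claimed morphism. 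Then define $\psi$ as the composite $A_{1} \times_{B_{1}} B_{2} \xrightarrow{\ \cong\ } \Lambda_{1} \times_{A_{1}} A_{2} \xrightarrow{\ \pi_{1}\ } A_{2} \xrightarrow{\ d_{2}\ } A_{1}$. The four diagrams \eqref{eqn:sopfdom}--\eqref{eqn:sopfcomp} then fall out by chasing this definition: \eqref{eqn:sopfunique} is immediate since $\langle d_{1},\, f_{2}\rangle$ is a one-sided inverse to the isomorphism composed with the projection; \eqref{eqn:sopflift} follows from the commuting triangle $d_{1} \cdot (\text{composition axiom})$ in \eqref{eqn:identity-comp}; \eqref{eqn:sopfdom} follows from the second square of \eqref{eqn:identity-comp} relating $d_{0}$, $d_{2}$, $d_{1}$ on $A_{2}$; and \eqref{eqn:sopfcomp}, which encodes that the ``lift part'' of the factorisation really is the $\varphi$-lift, follows from the associativity/unitality structure together with the fact that $\pi_{1}$ composed with $d_{1} \colon A_{2} \rightarrow A_{1}$ recovers the first leg $\varphi_{1}\pi_{0}$.

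For the converse, given $\psi$ satisfying the four diagrams I would build an inverse to $\langle d_{1}\pi_{1},\, f_{2}\pi_{1}\rangle$. Using \eqref{eqn:sopfdom} one forms $\widehat\psi \colon A_{1} \times_{B_{1}} B_{2} \rightarrow A_{2}$ (as in the displayed pullback diagram in the statement), and then \eqref{eqn:sopfcomp} shows $d_{1}\widehat\psi = \pi_{0}$, so the pair $(\widehat\psi, \pi_{0})$ assembles into a morphism $A_{1} \times_{B_{1}} B_{2} \rightarrow \Lambda_{1} \times_{A_{1}} A_{2}$ — here one needs that $\langle \varphi_{1}\pi_{0} \text{ re-expressed}, \widehat\psi\rangle$ lands in the pullback \eqref{eqn:kpullback}, which is where the domain and lifting conditions combine. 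One then checks this is a two-sided inverse: one composite being the identity uses \eqref{eqn:sopflift} and the defining property of $\widehat\psi$; the other composite being the identity uses \eqref{eqn:sopfunique}, which is exactly the statement that going $A_{2} \rightarrow A_{1} \times_{B_{1}} B_{2} \xrightarrow{\widehat\psi} A_{2}$ recovers (the relevant projection of) the identity. Since $A_{1} \times_{B_{1}} B_{2}$ is a pullback of $f_{1}$ along $d_{1}$ and the comparison from $\Lambda_{1} \times_{A_{1}} A_{2}$ is now invertible, the square in Definition~\ref{defn:internalsplitopf} is a pullback, so $(f,\varphi)$ is an internal split opfibration.

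The main obstacle I anticipate is bookkeeping rather than conceptual: carefully verifying that the morphisms $\widehat\psi$ and $(\widehat\psi, \pi_{0})$ are well-defined into the respective pullbacks, i.e.\ that all the requisite triangles commute on the nose, since this requires repeatedly invoking the internal-category axioms \eqref{eqn:identity-comp}--\eqref{eqn:unit-assoc} and the cofunctor axioms \eqref{eqn:cofundomcod}--\eqref{eqn:cofunidcomp} in the correct combination. A secondary subtlety is matching the two different pullback presentations of ``a morphism of $A$ lying over a composable pair of $B$'' so that the comparison map is literally the one appearing in \eqref{eqn:important}; I would isolate this as a small lemma (two pullbacks over the same cospan are canonically isomorphic) to keep the diagram chase clean, and otherwise suppress the most routine verifications as the paper does elsewhere.
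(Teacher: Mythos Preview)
Your proposal is correct and follows essentially the same route as the paper: both arguments reduce the pullback condition of Definition~\ref{defn:internalsplitopf} to the invertibility of the comparison map \eqref{eqn:findinverse}, and then show that the four diagrams \eqref{eqn:sopfdom}--\eqref{eqn:sopfcomp} are precisely what is needed to build and verify a two-sided inverse from $\psi$ via $\widehat{\psi}$. One small slip: in the converse direction the $\Lambda_{1}$-component of your candidate inverse should be $d_{0} \times d_{0} \colon A_{1} \times_{B_{1}} B_{2} \to \Lambda_{1}$, not $\pi_{0}$ (which lands in $A_{1}$); the paper writes the inverse as $\langle d_{0} \times d_{0},\, \widehat{\psi}\rangle$, and with that correction the compatibility $\varphi_{1}(d_{0}\times d_{0}) = d_{0}\widehat{\psi}$ is immediate from the defining diagram for $\widehat{\psi}$.
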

\begin{proof}
First notice from Definition~\ref{defn:internalsplitopf} that 
an internal lens $(f, \varphi) \colon A \br B$ is an internal split 
opfibration if and only if there is an isomorphism 
$\Lambda_{1} \times_{A_{1}} A_{2} \cong A_{1} \times_{B_{1}} B_{1}$.
For any internal lens the diagram \eqref{eqn:important} commutes, thus 
following composite morphism exists by the universal property of the 
pullback:
\begin{equation}
\label{eqn:findinverse}
\begin{tikzcd}
\Lambda_{1} \times_{A_{1}} A_{2}
\arrow[r, "\pi_{1}"]
&[+0.5em] A_{2}
\arrow[r, "{\langle d_{1},\, f_{2} \rangle}"]
&[+0.5em] A_{1} \times_{B_{1}} B_{2}
\end{tikzcd}
\end{equation}
Therefore an internal lens is an internal split opfibration if and only
if \eqref{eqn:findinverse} has an inverse. 
Given a morphism 
$\psi \colon A_{1} \times_{B_{1}} B_{2} \rightarrow A_{1}$ satisfying
axiom \eqref{eqn:sopfdom} there exists a morphism, 
\[
	\langle d_{0} \times d_{0}, \widehat{\psi} \rangle \colon 
	A_{1} \times_{B_{1}} B_{2} \longrightarrow 
	\Lambda_{1} \times_{A_{1}} A_{2}
\]
which is the required inverse if axioms \eqref{eqn:sopfunique}, 
\eqref{eqn:sopflift}, and \eqref{eqn:sopfcomp} are satisfied. 
Conversely, an inverse to \eqref{eqn:findinverse} exists exactly 
when there is a morphism $\psi$ which satisfies
the axioms in the statement of Proposition~\ref{prop:internalsplitopf}. 
The necessary diagram-chasing has been excluded. 
\end{proof}

The following example shows that the above characterisation of internal
split opfibrations yields the usual definition for $\E = \Set$, and 
explains the meaning of the names given to the axioms above. 

\begin{example}[$\E = \Set$]
\label{ex:sopf-dlens}
A delta lens $(f, \varphi) \colon A \br B$ 
(see Example~\ref{ex:deltalens}) is a split opfibration if for all 
commutative diagrams of the form, 
\begin{equation*}
\begin{tikzcd}[column sep = small, row sep = small]
f a
\arrow[rr, "u"]
\arrow[rd, "fw"']
& & b
\arrow[ld, "v"]
\\
& f a' &
\end{tikzcd}
\qquad
\text{in $B$}
\end{equation*}
for all $w \colon a \rightarrow a'$ in $A$, there exists a unique 
morphism $\psi(w, u, v) \colon p(a, u) \rightarrow a'$ such that 
$f \psi(w, u, v) = v$ and the following diagram commutes:
\begin{equation*}
\begin{tikzcd}[column sep = small, row sep = small]
a
\arrow[rr, "{\varphi(a, u)}"]
\arrow[rd, "w"']
& & p(a, u)
\arrow[ld, dashed, "{\psi(w, u, v)}"]
\\
& a' &
\end{tikzcd}
\qquad
\text{in $A$}
\end{equation*}
The domain of $\psi(w, u, v)$ is determined by \eqref{eqn:sopfdom}
and the uniqueness by \eqref{eqn:sopfunique}. 
The other two conditions on $\psi(w, u, v)$ are determined exactly by 
\eqref{eqn:sopflift} and \eqref{eqn:sopfcomp}, respectively. 
Thus $\psi$ provides a global way of stating that the chosen lifts 
given by $\varphi$ are \emph{$f$-opcartesian}.
\end{example}

\begin{example}
Every internal discrete opfibration is an internal split opfibration. 
\end{example}

There is an intrinsic way of characterising internal split opfibrations 
with respect to the right d\'{e}calage construction on $\CAT(\E)$, 
using the properties of isomorphism-on-objects internal functors and 
internal discrete opfibrations from Lemma~\ref{lem:important}.  

\begin{defn}
\label{defn:rightdec}
The \emph{right d\'{e}calage} construction is a comonad 
$D$ on $\CAT(\E)$ which assigns each internal 
functor $f \colon A \rightarrow B$ to an internal functor 
$Df \colon DA \rightarrow DB$ given by: 
\begin{equation}
\label{eqn:rightdecfunctor}
\begin{tikzcd}
A_{1}
\arrow[d, "f_{1}"']
& A_{2}
\arrow[l, "d_{1}"']
\arrow[d, "f_{2}"']
\arrow[r, "d_{2}"]
& A_{1}
\arrow[d, "f_{1}"]
\\
B_{1}
& B_{2}
\arrow[l, "d_{1}"]
\arrow[r, "d_{2}"']
& B_{1}
\end{tikzcd}
\end{equation}
The counit $\varepsilon \colon D \Rightarrow 1$ for the comonad assigns 
each internal category $A$ to an internal discrete fibration 
$\varepsilon_{A} \colon DA \rightarrow A$ given by:
\begin{equation}
\label{eqn:rightdec}
\begin{tikzcd}
A_{1}
\arrow[d, "d_{0}"']
& A_{2}
\arrow[l, "d_{1}"']
\arrow[rd, phantom, "\lrcorner" very near start]
\arrow[d, "d_{0}"']
\arrow[r, "d_{2}"]
& A_{1}
\arrow[d, "d_{0}"]
\\
A_{0}
& A_{1}
\arrow[l, "d_{0}"]
\arrow[r, "d_{1}"']
& A_{0}
\end{tikzcd}
\end{equation}
\end{defn}

\begin{example}[$\E = \Set$]
Given a small category $A$, the right d\'{e}calage $DA$ has objects 
given by arrows $w \colon a' \rightarrow a$ in $A$ 
and morphisms $u \colon w \rightsquigarrow v$ given by 
commutative triangles: 
\begin{equation*}
\begin{tikzcd}[row sep = small, column sep = small]
a'
\arrow[rd, "w"']
\arrow[rr, "u"]
& & a''
\arrow[ld, "v"]
\\
& a & 
\end{tikzcd}
\end{equation*}
That is, the right d\'{e}calage $DA$ is given by the coproduct of the 
slice categories of $A$.
The counit $\varepsilon_{A} \colon DA \rightarrow A$
sends a morphism $u \colon w \rightsquigarrow v$ in $DA$, depicted
above, to the morphism $u \colon a' \rightarrow a''$ in $A$.  
\end{example}

Given an internal lens $(f, \varphi) \colon A \br B$ with representation
in $\CAT(\E)$ given by \eqref{eqn:lensrep}, 
construct the following diagram using the counit of the right 
d\'{e}calage comonad: 
\begin{equation}
\label{eqn:rightdecmain}
\begin{tikzcd}
\Lambda \times_{A} DA
\arrow[r, "\pi_{1}"]
\arrow[d, "\pi_{0}"']
\arrow[rd, phantom, "\lrcorner" very near start]
& DA
\arrow[d, "\varepsilon_{A}"]
\arrow[r, "Df"]
& DB
\arrow[d, "\varepsilon_{B}"]
\\
\Lambda
\arrow[r, "\varphi"]
\arrow[rr, bend right, "\phibar"']
& A 
\arrow[r, "f"]
& B
\end{tikzcd}
\end{equation}
Notice that the projection $\pi_{1}$ is an identity-on-objects internal 
functor, therefore it is natural to ask: for which internal lenses 
$(f, \varphi) \colon A \br B$ is the composite $Df \circ \pi_{1}$ an 
internal discrete opfibration?  

\begin{theorem}
\label{thm:main}
An internal lens $(f, \varphi) \colon A \br B$ is an internal split
opfibration if and only if the internal functor 
$Df \circ \pi_{1} \colon \Lambda \times_{A} DA \rightarrow DB$,
defined in \eqref{eqn:rightdecmain}, is an internal discrete 
opfibration. 
\end{theorem}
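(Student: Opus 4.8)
The plan is to make the equivalence transparent by computing the internal category $\Lambda \times_{A} DA$ and the internal functor $Df \circ \pi_{1}$ explicitly in terms of the representation \eqref{eqn:lensrep}, and then matching the pullback condition of Definition~\ref{defn:internaldopf} for $Df \circ \pi_{1}$ against the pullback condition of Definition~\ref{defn:internalsplitopf}. First I would use Proposition~\ref{prop:internallens} to fix the representation \eqref{eqn:lensrep}, so that the internal category $\Lambda$ has object-of-objects $A_{0}$, object-of-morphisms $\Lambda_{1}$, domain map $d_{0} \colon \Lambda_{1} \rightarrow A_{0}$ and codomain map $p_{1} = d_{1}\varphi_{1}$, while $\varphi \colon \Lambda \rightarrow A$ is $(1_{A_{0}}, \varphi_{1})$. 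By Remark~\ref{rem:CatE} the pullback $\Lambda \times_{A} DA$ in \eqref{eqn:rightdecmain} is computed component-wise; since $\varphi_{0} = 1_{A_{0}}$ and since, by Definition~\ref{defn:rightdec}, the counit $\varepsilon_{A}$ is $d_{0} \colon A_{1} \rightarrow A_{0}$ on objects and $d_{0} \colon A_{2} \rightarrow A_{1}$ on morphisms, we obtain $(\Lambda \times_{A} DA)_{0} = A_{0} \times_{A_{0}} A_{1} \cong A_{1}$ and $(\Lambda \times_{A} DA)_{1} = \Lambda_{1} \times_{A_{1}} A_{2}$, where the latter is exactly the pullback \eqref{eqn:kpullback} of $\varphi_{1}$ against $d_{0}$. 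Under these identifications $\pi_{1}$ is identity-on-objects and is the projection $\pi_{1} \colon \Lambda_{1} \times_{A_{1}} A_{2} \rightarrow A_{2}$ of \eqref{eqn:kpullback} on morphisms, so $Df \circ \pi_{1}$ is $f_{1} \colon A_{1} \rightarrow B_{1}$ on objects and $f_{2}\pi_{1} \colon \Lambda_{1} \times_{A_{1}} A_{2} \rightarrow B_{2}$ on morphisms.

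Next I would pin down the domain map of the internal category $\Lambda \times_{A} DA$. In the right d\'{e}calage $DA$ the domain and codomain maps are $d_{1} \colon A_{2} \rightarrow A_{1}$ and $d_{2} \colon A_{2} \rightarrow A_{1}$ (the left and right legs of the top row of \eqref{eqn:rightdec}); combined component-wise with the domain map $d_{0} \colon \Lambda_{1} \rightarrow A_{0}$ of $\Lambda$, and using that $\varphi_{0} = 1$, the identity $d_{0}d_{1} = d_{0}d_{0}$ from \eqref{eqn:identity-comp}, the identity $d_{0}\varphi_{1} = d_{0}$ from \eqref{eqn:cofundomcod}, and the pullback condition of \eqref{eqn:kpullback} to check compatibility, this shows that the domain map of $\Lambda \times_{A} DA$ is $d_{1}\pi_{1} \colon \Lambda_{1} \times_{A_{1}} A_{2} \rightarrow A_{1}$ under the identification $(\Lambda \times_{A} DA)_{0} \cong A_{1}$. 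As the domain map of $DB$ is $d_{1} \colon B_{2} \rightarrow B_{1}$, Definition~\ref{defn:internaldopf} says that $Df \circ \pi_{1}$ is an internal discrete opfibration precisely when the square
\begin{equation*}
\begin{tikzcd}[row sep = small, column sep = small]
\Lambda_{1} \times_{A_{1}} A_{2}
\arrow[d, "d_{1}\pi_{1}"']
\arrow[r, "f_{2}\pi_{1}"]
& B_{2}
\arrow[d, "d_{1}"]
\\
A_{1}
\arrow[r, "f_{1}"']
& B_{1}
\end{tikzcd}
\end{equation*}
is a pullback. This square commutes for every internal lens by \eqref{eqn:important}, and it is verbatim the square appearing in Definition~\ref{defn:internalsplitopf}; hence its being a pullback is exactly the condition that $(f, \varphi)$ be an internal split opfibration, which gives the equivalence.

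I expect the only delicate point to be the accurate bookkeeping in the previous paragraph: one must be careful to record which of $d_{1}, d_{2} \colon A_{2} \rightarrow A_{1}$ plays the role of the domain map in the right d\'{e}calage $DA$, then thread this through the component-wise pullback, using the identity-on-objects property of $\varphi$ to collapse $(\Lambda \times_{A} DA)_{0}$ onto $A_{1}$. Once these identifications are in place there is no further diagram chase: the required commutativities are already recorded in Definition~\ref{defn:internalcat}, Definition~\ref{defn:internalcofun}, and \eqref{eqn:important}, so matching the discrete-opfibration square with the defining square of an internal split opfibration completes the proof.
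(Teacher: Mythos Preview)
Your proposal is correct and follows essentially the same approach as the paper's own proof: compute the object-of-objects and object-of-morphisms of $\Lambda \times_{A} DA$ component-wise, identify the domain map as $d_{1}\pi_{1}$, write out $Df \circ \pi_{1}$ explicitly, and observe that the discrete-opfibration square for this functor is literally the square of Definition~\ref{defn:internalsplitopf}. Your account is in fact slightly more careful than the paper's in justifying why the domain map of $\Lambda \times_{A} DA$ is $d_{1}\pi_{1}$ under the identification $(\Lambda \times_{A} DA)_{0} \cong A_{1}$, but otherwise the two arguments coincide.
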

\begin{proof}
The pullback $\Lambda \times_{A} DA$ is has object of objects $A_{1}$
and object of morphisms $\Lambda_{1} \times_{A_{1}} A_{2}$ as defined in
\eqref{eqn:kpullback}.
The projection $\pi_{1} \colon \Lambda \times_{A} DA \rightarrow DA$ 
is an identity-on-objects internal functor given by: 
\begin{equation*}
\begin{tikzcd}
A_{1}
\arrow[d, "1"']
& \Lambda_{1} \times_{A_{1}} A_{2}
\arrow[l, "d_{1}\pi_{1}"']
\arrow[r, "d_{2}\pi_{1}"]
\arrow[d, "\pi_{1}"]
& A_{1}
\arrow[d, "1"]
\\
A_{1}
& A_{2}
\arrow[l, "d_{1}"]
\arrow[r, "d_{2}"']
& A_{1}
\end{tikzcd}
\end{equation*}
Post-composing this projection by the internal functor 
$Df \colon DA \rightarrow DB$, defined in \eqref{eqn:rightdecfunctor}, 
yields an internal functor given by: 
\begin{equation*}
\begin{tikzcd}
A_{1}
\arrow[d, "f_{1}"']
& \Lambda_{1} \times_{A_{1}} A_{2}
\arrow[l, "d_{1}\pi_{1}"']
\arrow[r, "d_{2}\pi_{1}"]
\arrow[d, "f_{2}\pi_{1}"]
& A_{1}
\arrow[d, "f_{1}"]
\\
B_{1}
& B_{2}
\arrow[l, "d_{1}"]
\arrow[r, "d_{2}"']
& B_{1}
\end{tikzcd}
\end{equation*}
By Definition~\ref{defn:internaldopf}, 
this internal functor is an internal discrete opfibration if and only 
if the commutative diagram \eqref{eqn:important} is a pullback, 
which holds if and only if the internal lens 
$(f, \varphi) \colon A \br B$ is an internal split opfibration by 
Definition~\ref{defn:internalsplitopf}. 
\end{proof}

\begin{corollary}
\label{cor:rightdec}
If $(f, \varphi) \colon A \br B$ is an internal split opfibration, 
then the internal functor $Df \colon DA \rightarrow DB$, defined in 
\eqref{eqn:rightdecfunctor}, has the structure of an internal lens whose
internal cofunctor component consists of the morphisms: 
\[
	f_{1} \colon A_{1} \longrightarrow B_{1}
	\qquad \qquad
	\widehat{\psi} \colon A_{1} \times_{B_{1}} B_{2} 
	\longrightarrow A_{2}
\]
\end{corollary}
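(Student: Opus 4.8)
The plan is to obtain this as a direct consequence of Theorem~\ref{thm:main}, the converse construction of Proposition~\ref{prop:lensconverse}, and an explicit identification of the resulting cofunctor component. Recall from the proof of Theorem~\ref{thm:main} that $\Lambda \times_{A} DA$ has object of objects $A_{1}$ and object of morphisms $\Lambda_{1} \times_{A_{1}} A_{2}$ as in \eqref{eqn:kpullback}, that the projection $\pi_{1} \colon \Lambda \times_{A} DA \rightarrow DA$ is identity-on-objects, and that $Df \circ \pi_{1} \colon \Lambda \times_{A} DA \rightarrow DB$ is an internal discrete opfibration precisely because $(f, \varphi)$ is an internal split opfibration. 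First I would package these facts into the commutative triangle of internal functors with apex $\Lambda \times_{A} DA$, left leg $\pi_{1}$, right leg $Df \circ \pi_{1}$, and base the internal functor $Df \colon DA \rightarrow DB$ defined in \eqref{eqn:rightdecfunctor}: the left leg is identity-on-objects, hence isomorphism-on-objects, and the right leg is an internal discrete opfibration, so Proposition~\ref{prop:lensconverse} endows $Df$ with the structure of an internal lens $DA \br DB$, including the cofunctor axioms of Definition~\ref{defn:internalcofun} and the lens compatibility \eqref{eqn:internallens}.

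The remaining work is to check that the cofunctor component produced this way is the pair $(f_{1}, \widehat{\psi})$. I would trace through the construction in the proof of Proposition~\ref{prop:converse} which underlies Proposition~\ref{prop:lensconverse}. Its object component is $(Df \circ \pi_{1})_{0}$ composed with the inverse of $(\pi_{1})_{0}$, which is $f_{1} \circ 1_{A_{1}} = f_{1}$ since $\pi_{1}$ is identity-on-objects. Its morphism component is the composite of the lift $A_{1} \times_{B_{1}} B_{2} \rightarrow \Lambda_{1} \times_{A_{1}} A_{2}$ --- supplied by the internal discrete opfibration structure on $Df \circ \pi_{1}$ --- with the functor component $\pi_{1} \colon \Lambda_{1} \times_{A_{1}} A_{2} \rightarrow A_{2}$ of the right d\'{e}calage projection. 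By Remark~\ref{rem:discopf} this lift is the unique inverse of $\langle d_{1}\pi_{1},\, f_{2}\pi_{1} \rangle \colon \Lambda_{1} \times_{A_{1}} A_{2} \rightarrow A_{1} \times_{B_{1}} B_{2}$, which is exactly the morphism \eqref{eqn:findinverse} appearing in the proof of Proposition~\ref{prop:internalsplitopf}; that proof shows its inverse is $\langle d_{0} \times d_{0},\, \widehat{\psi} \rangle$. Post-composing with $\pi_{1} \colon \Lambda_{1} \times_{A_{1}} A_{2} \rightarrow A_{2}$ therefore yields $\widehat{\psi}$, so the cofunctor component of the internal lens on $Df$ is $(f_{1}, \widehat{\psi})$, as required.

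The one genuinely non-formal point is the identification in the previous paragraph: that the unique structure witnessing $Df \circ \pi_{1}$ as an internal discrete opfibration (guaranteed to exist by Theorem~\ref{thm:main}) agrees, under the identifications of the pullbacks \eqref{eqn:kpullback} and \eqref{eqn:fpullback}, with the inverse $\langle d_{0} \times d_{0},\, \widehat{\psi} \rangle$ built in the proof of Proposition~\ref{prop:internalsplitopf}. Once this matching is established, the rest is a routine unwinding of definitions, and one may, if desired, verify \eqref{eqn:internallens} and the cofunctor axioms for $(f_{1}, \widehat{\psi})$ by hand as a consistency check, although these already follow from Proposition~\ref{prop:lensconverse}.
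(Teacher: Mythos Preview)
Your proposal is correct and follows essentially the same approach as the paper: form the commutative triangle with apex $\Lambda \times_{A} DA$, invoke Theorem~\ref{thm:main} to see that $Df \circ \pi_{1}$ is an internal discrete opfibration, apply Proposition~\ref{prop:lensconverse}, and then use Proposition~\ref{prop:internalsplitopf} to identify the resulting cofunctor component as $(f_{1}, \widehat{\psi})$. The paper's proof is terser---it simply cites Proposition~\ref{prop:lensconverse} and Proposition~\ref{prop:internalsplitopf} without unwinding the identification---whereas you explicitly trace through Proposition~\ref{prop:converse} and match the discrete-opfibration lift with the inverse $\langle d_{0} \times d_{0}, \widehat{\psi} \rangle$ from \eqref{eqn:findinverse}; this extra detail is sound and helpful.
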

\begin{proof}
If $(f, \varphi) \colon A \br B$ is an internal split opfibration, 
then from Theorem~\ref{thm:main} there is a commutative diagram of 
internal functors,
\begin{equation*}
\begin{tikzcd}[column sep = tiny]
& \Lambda \times_{A} DA
\arrow[ld, "\pi_{1}"']
\arrow[rd, "Df \circ \pi_{1}"]
& \\
DA
\arrow[rr, "Df"']
& & DB
\end{tikzcd}
\end{equation*}
where $\pi_{1}$ is an identity-on-objects internal functor and 
$Df \circ \pi_{1}$ is an internal discrete opfibration.
Applying Proposition~\ref{prop:lensconverse} and 
Proposition~\ref{prop:internalsplitopf} yields the result. 
\end{proof}

Theorem~\ref{thm:main} shows that the property an internal lens must
satisfy to be an internal split opfibration is derived from the 
property of an internal functor induced from the right d\'{e}calage
comonad to be an internal discrete opfibration. 
Furthermore, Corollary~\ref{cor:rightdec} shows that the unique 
structure used to characterise internal split opfibrations in 
Proposition~\ref{prop:internalsplitopf} is equivalent to giving the 
internal functor $Df \colon DA \rightarrow DB$ a particular internal 
lens structure. 
Thus an internal split opfibration may be described by a suitable 
pair of internal lenses, making it straightforward to prove the 
well-known results that they are closed under composition and stable
under pullback (see Remark~\ref{rem:lensrepcomp} and 
Proposition~\ref{prop:lenspullback}). 

\begin{remark}
Let $\SOPF(\E)$ be the category of internal categories and internal 
split opfibrations, together with the faithful forgetful functor 
$\SOPF(\E) \rightarrow \LENS(\E)$.
By Corollary~\ref{cor:rightdec}, the right d\'{e}calage comonad 
restricts to a functor $D \colon \SOPF(\E) \rightarrow \LENS(\E)$. 
\end{remark}

\section{Internal split opfibrations and strict factorisation systems} 
\label{sec:factorisation}

This section characterises an internal split opfibration via a strict 
factorisation system on the domain of an internal lens, corresponding to 
the known result in $\Set$ that every morphism in the domain factorises 
uniquely into a \emph{chosen} opcartesian lift followed by a 
\emph{vertical} morphism. 
It is shown that this equivalent formulation of an internal split 
opfibration agrees with the definition given in 
\cite[Chapter~2, Exercise~6]{Joh77}.

\begin{defn}
\label{defn:strictfact}
A \emph{strict factorisation system} $(E, M)$ on an internal category 
$A$ consists of a pair of faithful, identity-on-objects internal 
functors, 
\[
	e \colon E \longrightarrow A \qquad \qquad 
	m \colon M \longrightarrow A
\]
together with the pullback, 
\begin{equation}
\label{eqn:strictfact}
\begin{tikzcd}[column sep = tiny, row sep = small]
& E_{1} \times_{A_{0}} M_{1}
\arrow[ld, "\pi_{0}"']
\arrow[rd, "\pi_{1}"]
\arrow[dd, phantom, "\lrcorner" rotate = -45, very near start]
& \\
E_{1}
\arrow[rd, "d_{1}"']
& & M_{1}
\arrow[ld, "d_{0}"]
\\
& A_{0} & 
\end{tikzcd}
\end{equation}
such that the morphism
$d_{1}(e_{1} \times m_{1}) \colon E_{1} \times_{A_{0}} M_{1} 
\rightarrow A_{1}$
is an isomorphism.
\end{defn}

\begin{example}[$\E = \Set$]
A strict factorisation system $(E, M)$ on a small category $A$ consists
of wide subcategories $E$ and $M$ of $A$ such that every morphism 
$w \in A$ has a strictly unique factorisation $w = m \circ e$ with 
$m \in M$ and $e \in E$. 
\end{example}

Given an internal lens $(f, \varphi) \colon A \br B$, the goal is to
give a strict factorisation system $(\Lambda, V)$ on the internal 
category $A$, where the internal functor 
$\varphi \colon \Lambda \rightarrow A$ provides the left class, 
and the right class $j \colon V \rightarrow A$ is defined as follows.

\begin{defn}
\label{defn:verticalmorphisms}
Given an internal functor $f \colon A \rightarrow B$, there is a
faithful, identity-on-objects internal functor 
$j \colon V \rightarrow A$ constructed by the following pullback, 
\begin{equation}
\label{eqn:vertcat}
\begin{tikzcd}[row sep = small, column sep = small]
& V
\arrow[rd, "\widehat{f}"]
\arrow[ld, "j"']
\arrow[dd, phantom, "\lrcorner" rotate = -45, very near start]
& \\
A
\arrow[rd, "f"']
& & 
B_{0}
\arrow[ld, "i_{0}"]
\\
& B &
\end{tikzcd}
\qquad 
\rightsquigarrow
\qquad
\begin{tikzcd}[row sep = small, column sep = small]
& V_{1}
\arrow[rd, "\widehat{f}_{1}"]
\arrow[ld, "j_{1}"']
\arrow[dd, phantom, "\lrcorner" rotate = -45, very near start]
& \\
A_{1}
\arrow[rd, "f_{1}"']
& & 
B_{0}
\arrow[ld, "i_{0}"]
\\
& B_{1} &
\end{tikzcd}
\end{equation}
where $i \colon B_{0} \rightarrow B$ is the inclusion of the discrete
category of objects, and $V$ is the 
\emph{internal category of vertical morphisms} for an internal functor 
$f$. 
Explicitly, the internal functor $j \colon V \rightarrow A$ is given by: 
\begin{equation}
\begin{tikzcd}
A_{0}
\arrow[d, "1"']
& V_{1}
\arrow[l, "d_{0}"']
\arrow[r, "d_{1}"]
\arrow[d, "j_{1}"]
& A_{0}
\arrow[d, "1"]
\\
A_{0}
& A_{1}
\arrow[l, "d_{0}"]
\arrow[r, "d_{1}"']
& A_{0}
\end{tikzcd}
\end{equation}
\end{defn}

To define the strict factorisation system $(\Lambda, V)$ on $A$
for an internal lens $(f, \varphi) \colon A \br B$,
first construct the following pullback: 
\begin{equation}
\label{eqn:factorpullback}
\begin{tikzcd}[row sep = small, column sep = small]
& \Lambda_{1} \times_{A_{0}} V_{1}
\arrow[ld, "\pi_{0}"']
\arrow[dd, phantom, "\lrcorner" rotate = -45, very near start]
\arrow[rd, "\pi_{1}"]
& 
\\
\Lambda_{1}
\arrow[rd, "p_{1}"']
& & V_{1}
\arrow[ld, "d_{0}"]
\\
& A_{0} &
\end{tikzcd}
\end{equation}
The following result provides the unique structure such that
 $d_{1}(\varphi_{1} \times j_{1}) \colon 
\Lambda_{1} \times_{A_{0}} V_{1} \rightarrow A_{1}$
is an isomorphism. 

\begin{proposition}
\label{prop:isostructure}
If $(f, \varphi) \colon A \br B$ is an internal lens, 
then $(\Lambda, V)$ is a strict factorisation system on $A$ consisting 
of the internal functors $\varphi \colon \Lambda \rightarrow A$ and 
$j \colon V \rightarrow A$ 
if and only if there exists an endomorphism,
\[
	\chi \colon A_{1} \longrightarrow A_{1}
\]
satisfying the following four commutative diagrams: 
\begin{itemize}[$\diamond$]
\item This diagram specifies a ``domain condition'': 
\begin{equation}
\label{eqn:isodom}
\begin{tikzcd}
A_{1}
\arrow[rr, "\chi"]
\arrow[d, "{\langle d_{0}, f_{1} \rangle}"']
& & A_{1}
\arrow[d, "d_{0}"]
\\
\Lambda_{1}
\arrow[r, "\varphi_{1}"']
& A_{1}
\arrow[r, "d_{1}"']
& A_{0}
\end{tikzcd}
\end{equation}
\item This diagram specifies a ``uniqueness condition'': 
\begin{equation}
\label{eqn:isounique}
\begin{tikzcd}
\Lambda_{1} \times_{A_{0}} V_{1}
\arrow[d, "\pi_{1}"']
\arrow[r, "\varphi_{1} \times j_{1}"]
& A_{2}
\arrow[r, "d_{1}"]
& A_{1}
\arrow[d, "\chi"]
\\
V_{1}
\arrow[rr, "j_{1}"']
& & A_{1}
\end{tikzcd}
\end{equation}
\item This diagram specifies a ``fibre condition'':
\begin{equation}
\label{eqn:isofibre}
\begin{tikzcd}
A_{1}
\arrow[rr, "\chi"]
\arrow[d, "f_{1}"']
& & A_{1}
\arrow[d, "f_{1}"]
\\
B_{1}
\arrow[r, "d_{1}"']
& B_{0}
\arrow[r, "i_{0}"']
& B_{1}
\end{tikzcd}
\end{equation}
\item This diagram specifies a ``composition condition'': 
\begin{equation}
\label{eqn:isocomp}
\begin{tikzcd}[column sep = small]
& A_{1}
\arrow[ld, "\widehat{\chi}"']
\arrow[rd, "1"]
& \\
A_{2}
\arrow[rr, "d_{1}"']
& & A_{1}
\end{tikzcd}
\end{equation}
\end{itemize}
The pullback $\Lambda_{1} \times_{A_{0}} V_{1}$ is defined in 
\eqref{eqn:factorpullback},
and the morphism $\widehat{\chi}$ is defined using \eqref{eqn:isodom} 
via the universal property of the pullback:
\begin{equation*}
\begin{tikzcd}[row sep = small, column sep = small]
& A_{1}
\arrow[dd, dashed, "\widehat{\chi}"]
\arrow[ld, "{\langle d_{0},\, f_{1} \rangle}"']
\arrow[rddd, "\chi", bend left]
& \\
\Lambda_{1}
\arrow[dd, "\varphi_{1}"']
& & \\[-1em]
& A_{2}
\arrow[ld, "d_{0}"']
\arrow[rd, "d_{2}"]
\arrow[dd, phantom, "\lrcorner" rotate = -45, very near start]
& \\
A_{1}
\arrow[rd, "d_{1}"']
& & A_{1}
\arrow[ld, "d_{0}"]
\\
& A_{0} & 
\end{tikzcd}
\end{equation*}
\end{proposition}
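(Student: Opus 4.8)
The plan is to mirror the proof of Proposition~\ref{prop:internalsplitopf}, reducing the statement to the invertibility of a single canonical morphism and then encoding that inverse by the endomorphism $\chi$. First I would observe that $\varphi \colon \Lambda \to A$ is a faithful, identity-on-objects internal functor by Proposition~\ref{prop:internallens}, and $j \colon V \to A$ is a faithful, identity-on-objects internal functor by Definition~\ref{defn:verticalmorphisms}; moreover the pullback \eqref{eqn:strictfact} for the pair $(\Lambda, V)$ is precisely \eqref{eqn:factorpullback}, since the codomain map of $\Lambda$ is $p_{1} = d_{1}\varphi_{1}$. Hence, by Definition~\ref{defn:strictfact}, $(\Lambda, V)$ is a strict factorisation system on $A$ if and only if the internal morphism $d_{1}(\varphi_{1} \times j_{1}) \colon \Lambda_{1} \times_{A_{0}} V_{1} \to A_{1}$ is an isomorphism, so the proof reduces to characterising via $\chi$ when this morphism admits a two-sided inverse $A_{1} \to \Lambda_{1} \times_{A_{0}} V_{1}$.

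For the direction from structure to isomorphism, suppose $\chi \colon A_{1} \to A_{1}$ satisfies the four diagrams. The fibre condition \eqref{eqn:isofibre} says exactly that $f_{1}\chi$ factors through $i_{0} \colon B_{0} \to B_{1}$, which is the compatibility needed to lift $\chi$ along the pullback \eqref{eqn:vertcat} defining $V_{1}$, so $\chi = j_{1} \circ \chi'$ for a unique internal morphism $\chi' \colon A_{1} \to V_{1}$. The domain condition \eqref{eqn:isodom} then reads $p_{1}\langle d_{0}, f_{1}\rangle = d_{0}\chi'$, which is precisely the compatibility for $\langle d_{0}, f_{1}\rangle \colon A_{1} \to \Lambda_{1}$ and $\chi' \colon A_{1} \to V_{1}$ to pair up to a morphism $\sigma \colon A_{1} \to \Lambda_{1} \times_{A_{0}} V_{1}$. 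I would then check that $\sigma$ is inverse to $d_{1}(\varphi_{1}\times j_{1})$. Unwinding the definition of $\widehat{\chi}$ as the map into $A_{2} = A_{1}\times_{A_{0}} A_{1}$ with legs $\varphi_{1}\langle d_{0}, f_{1}\rangle$ and $\chi$, one finds $(\varphi_{1}\times j_{1})\,\sigma = \widehat{\chi}$, so the composition condition \eqref{eqn:isocomp} is exactly $d_{1}(\varphi_{1}\times j_{1})\,\sigma = 1_{A_{1}}$; and $\sigma\, d_{1}(\varphi_{1}\times j_{1}) = 1$ is verified projection by projection, where the $\Lambda_{1}$-component holds for \emph{every} internal lens (using the lens axiom $f_{1}\varphi_{1} = \phibar_{1}$, the composition law for $f$, and the unit law in $B$ together with the verticality of morphisms in $V$, so that the $f$-image of a composite recovers $\phibar_{1}\pi_{0}$ and hence $\pi_{0}$), while the $V_{1}$-component, after composing with the monomorphism $j_{1}$, is precisely the uniqueness condition \eqref{eqn:isounique}.

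For the converse, given that $d_{1}(\varphi_{1}\times j_{1})$ is an isomorphism with inverse $\rho$, I would set $\chi := j_{1}\,\pi_{1}\,\rho \colon A_{1} \to A_{1}$ and read the four diagrams off from the fact that $\rho$ is both a section and a retraction: the domain condition from the commuting square of \eqref{eqn:factorpullback} (and $d_{0}j_{1} = d_{0}$); the fibre condition from $\pi_{1}$ landing in $V_{1}$, whose morphisms are vertical; the composition condition from $(\varphi_{1}\times j_{1})\rho = \widehat{\chi}$ (using the universal-lens identity $\langle d_{0}, f_{1}\rangle\, d_{1}(\varphi_{1}\times j_{1}) = \pi_{0}$) together with $d_{1}(\varphi_{1}\times j_{1})\rho = 1$; and the uniqueness condition directly from $\rho\, d_{1}(\varphi_{1}\times j_{1}) = 1$. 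As in Proposition~\ref{prop:internalsplitopf}, the main obstacle is not conceptual but bookkeeping: extracting each of the four internal equations cleanly from the single invertibility statement, and in particular keeping careful track of which of the two projections of $\Lambda_{1}\times_{A_{0}}V_{1}$ is controlled by which axiom. I would present the reduction to invertibility in full and then indicate, rather than grind through, the remaining diagram-chases.
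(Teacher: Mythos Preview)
Your proposal is correct and follows essentially the same approach as the paper: reduce the statement to invertibility of $d_{1}(\varphi_{1}\times j_{1})\colon \Lambda_{1}\times_{A_{0}}V_{1}\to A_{1}$, build the candidate inverse as $\big\langle \langle d_{0}, f_{1}\rangle,\ \langle \chi, d_{1}f_{1}\rangle \big\rangle$ (your $\sigma$), and match the four diagrams to the two halves of the invertibility check. Your presentation is in fact more explicit than the paper's, which omits the diagram-chasing and simply records which axioms are used to construct the inverse and which to verify it.
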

\begin{proof}
By Definition~\ref{defn:strictfact} the pair $(\Lambda, V)$ 
is a strict factorisation system on $A$ if and only if 
the morphism 
$d_{1}(\varphi_{1} \times j_{1}) 
\colon \Lambda_{1} \times_{A_{0}} V_{1} \rightarrow A_{1}$ 
has an inverse. 

Given a morphism $\chi \colon A_{1} \rightarrow A_{1}$ satisfying axioms 
\eqref{eqn:isodom} and \eqref{eqn:isofibre} there is a morphism,
\[
	\big\langle \langle d_{0}, f_{1}\rangle, 
	\langle \chi, d_{1}f_{1} \rangle \big\rangle
	\colon A_{1} \longrightarrow \Lambda_{1} \times_{A_{0}} V_{1}
\]
which is the required inverse if axioms \eqref{eqn:isounique} 
and \eqref{eqn:isocomp} are satisfied. 
Conversely, an inverse to $d_{1}(\varphi_{1} \times j_{1}) 
\colon \Lambda_{1} \times_{A_{0}} V_{1} \rightarrow A_{1}$ 
exists exactly when there is 
a morphism $\chi$ which satisfies the axioms in the statement of 
Proposition~\ref{prop:isostructure}. 
The necessary diagram-chasing has been excluded. 
\end{proof}

The following example shows that the above characterisation yields the 
expected strict factorisation system on a small category $A$ given by a 
\emph{pre-opcartesian morphism} followed by a vertical morphism, and 
explains the meaning of the names given to the axioms above. 

\begin{example}[$\E = \Set$]
\label{ex:exchi}
A delta lens $(f, \varphi) \colon A \br B$ 
(see Example~\ref{ex:deltalens})
has a strict factorisation system $(\Lambda, V)$ if for all morphisms 
$w \colon a \rightarrow a'$ in $A$ there exists a 
unique morphism $\chi(w) \colon p(a, fw) \rightarrow a'$ such that 
$f\chi(w) = 1_{fa'}$ and $\chi(w) \circ \varphi(a, fw) = w$. 
\begin{equation*}
\begin{tikzcd}
& & p(a, fw)
\arrow[d, dashed, "{\chi(w)}"]
\\
A
\arrow[d, "f", shift left, harpoon]
& a
\arrow[d, phantom, "\vdots"]
\arrow[ru, "{\varphi(a, fw)}"]
\arrow[r, "w"']
& a' 
\arrow[d, phantom, "\vdots"]
\\
B
\arrow[u, "\varphi", shift left, harpoon]
& f a
\arrow[r, "fw"']
& fa'
\end{tikzcd}
\end{equation*}
The domain of $\chi(w)$ corresponds to \eqref{eqn:isodom} and the 
uniqueness to \eqref{eqn:isounique}.
The other two conditions on $\chi(w)$ correspond exactly to 
\eqref{eqn:isofibre} and \eqref{eqn:isocomp}, respectively.  
Thus $\chi$ provides a global way of characterising the chosen lifts 
$\varphi(a, fw)$ as \emph{pre-opcartesian morphisms}. 
\end{example}

The axioms of a delta lens imply that the chosen pre-opcartesian
morphisms compose to give a chosen pre-opcartesian morphism. 
This means that a delta lens with the strict factorisation system as 
given in Example~\ref{ex:exchi} is a split opfibration. 
Internally, there is also a strong similarity between 
Proposition~\ref{prop:internalsplitopf} and 
Proposition~\ref{prop:isostructure}, in terms of additional structure 
with axioms placed on an internal lens,
which motivates the following theorem. 

\begin{theorem}
\label{thm:johnstone}
An internal lens $(f, \varphi) \colon A \br B$ is an internal split
opfibration if and only the pair $(\Lambda, V)$ is a strict 
factorisation system on $A$.
\end{theorem}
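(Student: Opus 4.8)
The plan is to compare the two characterisations by unique structure: Proposition~\ref{prop:internalsplitopf} says $(f,\varphi)$ is an internal split opfibration exactly when there is a morphism $\psi \colon A_{1} \times_{B_{1}} B_{2} \rightarrow A_{1}$ satisfying \eqref{eqn:sopfdom}--\eqref{eqn:sopfcomp}, while Proposition~\ref{prop:isostructure} says $(\Lambda, V)$ is a strict factorisation system on $A$ exactly when there is an endomorphism $\chi \colon A_{1} \rightarrow A_{1}$ satisfying \eqref{eqn:isodom}--\eqref{eqn:isocomp}. So it suffices to produce a bijection between such morphisms $\psi$ and such morphisms $\chi$. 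First I would observe that the pullback $A_{1} \times_{B_{1}} B_{2}$ of \eqref{eqn:fpullback} and the object $A_{1}$ are related by the morphism $A_{1} \rightarrow A_{1} \times_{B_{1}} B_{2}$ sending a morphism $w$ to the pair $(w, (f_{1}w)\circ i_{0})$ of $w$ together with the ``identity continuation'' of $fw$ --- concretely $\langle 1_{A_{1}}, \langle f_{1}, i_{0}d_{1}f_{1}\rangle\rangle$, using that $B_{2}$ is the pullback of $d_{1}, d_{0} \colon B_{1} \rightrightarrows B_{0}$ and that $i_{0}d_{1}f_{1}$ has the right domain.

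Given $\psi$, I would define $\chi \coloneqq \psi \circ \langle 1_{A_{1}}, \langle f_{1}, i_{0}d_{1}f_{1}\rangle\rangle \colon A_{1} \rightarrow A_{1}$; conversely, given $\chi$, I would define $\psi$ by feeding a pair $(w, \beta)$ with $f_{1}w$ composable with $\beta$ into the formula that first composes the chosen lift $\varphi_{1}(d_{0}w, \beta)$ with $\chi$ applied to the appropriate morphism --- tracking the $\E=\Set$ pictures of Example~\ref{ex:sopf-dlens} and Example~\ref{ex:exchi}, where $\psi(w, u, v) = \chi(w') \circ (\text{lift of } v)$ for $w'$ the composite $v \circ \text{(old lift)}$... the cleanest internal route is to note that $A_{1} \times_{B_{1}} B_{2} \cong \Lambda_{1} \times_{A_{0}} V_{1}$ once a split opfibration structure is present, which is essentially the content being proved, so instead I would verify the passages $\psi \mapsto \chi$ and $\chi \mapsto \psi$ are mutually inverse directly on the defining diagrams, checking that \eqref{eqn:sopfdom} corresponds to \eqref{eqn:isodom}, \eqref{eqn:sopfunique} to \eqref{eqn:isounique}, \eqref{eqn:sopflift} to \eqref{eqn:isofibre}, and \eqref{eqn:sopfcomp} to \eqref{eqn:isocomp}, exploiting the parallel structure the author has deliberately set up (matching ``domain / uniqueness / lifting(=fibre) / composition'' conditions).

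Alternatively --- and this is probably the slicker argument --- I would use Corollary~\ref{cor:rightdec}: if $(f,\varphi)$ is a split opfibration then $Df \colon DA \rightarrow DB$ carries an internal lens structure with cofunctor component $(f_{1}, \widehat{\psi})$. Pulling back $Df$ along $\varepsilon_{B} \colon DB \rightarrow B$ (a discrete fibration) and using stability under pullback, together with the fact that $V$ sits inside $DA$ as the fibre over the discrete category $B_{0}$ --- indeed $V$ is the pullback of $DA \xrightarrow{Df} DB$ along $i_{0} \colon B_{0} \rightarrow B$ composed with $\varepsilon$ suitably --- one gets the factorisation $\varphi_{1} \times j_{1}$ invertible directly from the lens structure on $Df$. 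The main obstacle will be the bookkeeping identifying $V_{1}$ with the ``vertical part'' of $DA$ and checking that the inverse morphism $\langle\langle d_{0}, f_{1}\rangle, \langle\chi, d_{1}f_{1}\rangle\rangle$ of Proposition~\ref{prop:isostructure} really is $\langle d_{0}\times d_{0}, \widehat{\psi}\rangle$ transported across this identification; once that correspondence is pinned down, the equivalence with Johnstone's axiomatisation falls out since \eqref{eqn:isodom}--\eqref{eqn:isocomp} are exactly the exercise's axioms on an internal functor. I expect to suppress, as the author does elsewhere, the lengthy diagram-chases verifying the four pairs of conditions match.
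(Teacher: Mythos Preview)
Your first approach is essentially the paper's: reduce to Propositions~\ref{prop:internalsplitopf} and~\ref{prop:isostructure} and pass between $\psi$ and $\chi$. Your formula $\chi = \psi \circ \langle 1_{A_{1}}, \langle f_{1}, i_{0}d_{1}f_{1}\rangle\rangle$ is exactly the paper's \eqref{eqn:chi}, written $\chi' = \psi \circ \langle 1, i_{0}f_{1}\rangle$ using the degeneracy $i_{0}\colon B_{1}\to B_{2}$. So that direction is fine.

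The converse direction, however, is where you drift. The paper does \emph{not} set up a bijection and does not need the two passages to be mutually inverse; since both $\psi$ and $\chi$ are unique when they exist, it suffices to produce \emph{some} $\psi'$ from $\chi$ satisfying \eqref{eqn:sopfdom}--\eqref{eqn:sopfcomp}. The paper's explicit formula is
\[
\psi' \;=\; d_{1}\circ \bigl\langle\, \varphi_{1}\, p_{2}\, \langle d_{0}\times d_{0},\ \pi_{1}\rangle,\ \chi\,\pi_{0}\,\bigr\rangle
\colon A_{1}\times_{B_{1}} B_{2}\longrightarrow A_{2}\longrightarrow A_{1},
\]
which in $\Set$ reads $\psi'(w,u,v) = \chi(w)\circ \varphi(p(a,u),v)$. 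Note that $\chi$ is applied to $w$ itself, not to any composite $w'$: your tentative ``$\chi(w')$ for $w'$ the composite $v\circ(\text{old lift})$'' does not typecheck (you are composing a $B$-morphism with an $A$-morphism) and is not the right formula. The correct picture is Example~\ref{ex:exchi} followed by the diagram after \eqref{eqn:psi}: lift $v$ at $p(a,u)$, then post-compose with the vertical $\chi(w)$.

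Your alternative via Corollary~\ref{cor:rightdec} and pulling back $Df$ is not what the paper does, and the identification of $V$ inside $DA$ you sketch would require additional work not supplied here; the paper's route through the explicit $\psi'$ is shorter.
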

\begin{proof}
Given an internal split opfibration $(f, \varphi) \colon A \br B$ 
equipped with a morphism 
$\psi \colon A_{1} \times_{A_{0}} B_{2} \rightarrow A_{1}$ 
as in Proposition~\ref{prop:internalsplitopf},
define a morphism $\chi' \colon A_{1} \rightarrow A_{1}$ as the 
following composite:
\begin{equation}
\label{eqn:chi}
\begin{tikzcd}
A_{1}
\arrow[r, "{\langle 1,\, i_{0}f_{1} \rangle}"]
& A_{1} \times_{B_{1}} B_{2}
\arrow[r, "\psi"]
& A_{1}
\end{tikzcd}
\end{equation}
It may be shown that $\chi'$ satisfies the axioms of 
Proposition~\ref{prop:isostructure} and thus yields a strict 
factorisation system $(\Lambda, V)$ on $A$. 

Conversely, given an internal lens $(f, \varphi) \colon A \br B$ 
equipped with  an endomorphism 
$\chi \colon A_{1} \rightarrow A_{1}$ as in 
Proposition~\ref{prop:isostructure}, define a morphism 
$\psi' \colon A_{1} \times_{B_{1}} B_{2} \rightarrow A_{1}$ 
as the following composite:
\begin{equation}
\label{eqn:psi}
\begin{tikzcd}
A_{1} \times_{B_{1}} B_{2}
\arrow[r, "{\langle \varphi_{1}p_{2} 
\langle d_{0} \times d_{0},\, \pi_{1} \rangle,\, \chi \pi_{0} \rangle}"]
&[+5em] A_{2}
\arrow[r, "d_{1}"]
& A_{1}
\end{tikzcd}
\end{equation}
It may be shown that $\psi'$ satisfies the axioms of 
Proposition~\ref{prop:internalsplitopf} and thus yields an internal 
split opfibration. 
\end{proof}

\begin{example}[$\E = \Set$]
Given a split opfibration as in Example~\ref{ex:exchi},
the function \eqref{eqn:psi} is defined as 
$\psi'(w, u, v) = \chi(w) \circ \varphi(p(a, u), v)$ as depicted below:
\begin{equation*}
\begin{tikzcd}
& & p(a, fw) 
\arrow[dd, dashed, "{\chi(w)}"]
\\
& p(a, u) 
\arrow[ru, "{\varphi(p(a, u), v)}"' pos = 0.25]
\arrow[rd, dashed, "{\psi'(w, u, v)}" pos = 0.3]
& \\
a
\arrow[ru, "{\varphi(a, u)}"']
\arrow[rr, "w"]
\arrow[rruu, bend left, "{\varphi(a,\, fw)}"]
& & a'
\end{tikzcd}
\end{equation*} 
\end{example}

Theorem~\ref{thm:johnstone} is important as it characterises 
internal split opfibrations as internal lenses 
$(f, \varphi) \colon A \br B$ where 
$A_{1} \cong \Lambda_{1} \times_{A_{1}} V_{1}$, giving the object 
of morphisms the universal property of the pullback 
\eqref{eqn:factorpullback}.
This mirrors Definition~\ref{defn:internaldopf} which states that 
internal discrete opfibrations are internal functors 
$f \colon A \rightarrow B$ where 
$A_{1} \cong \Lambda_{1} = A_{0} \times_{B_{0}} B_{1}$. 

\begin{corollary}
\label{cor:johnstone}
If $(f, \varphi) \colon A \br B$ is an internal split opfibration, 
then the following diagrams commute: 
\begin{equation*}
\begin{tikzcd}
\Lambda_{1} 
\arrow[d, "p_{1}"']
\arrow[r, "\varphi_{1}"]
& A_{1}
\arrow[d, "\chi"]
\\
A_{0}
\arrow[r, "i_{0}"']
& A_{1}
\end{tikzcd}
\quad
\begin{tikzcd}
A_{2}
\arrow[d, "d_{1}"']
\arrow[r, "{\langle \chi d_{1} (\chi \times \varphi_{1} 
\langle d_{0}, f_{1}\rangle),\, \chi d_{2} \rangle}"]
&[+5em] A_{2}
\arrow[d, "d_{1}"]
\\
A_{1}
\arrow[r, "\chi"']
& A_{1}
\end{tikzcd}
\quad
\begin{tikzcd}[column sep = small]
& A_{1}
\arrow[ld, "\chi"']
\arrow[rd, "d_{1}"]
& \\
A_{1}
\arrow[rr, "d_{1}"']
& & A_{0}
\end{tikzcd}
\quad 
\begin{tikzcd}[column sep = small]
& A_{1}
\arrow[ld, "\chi"']
\arrow[rd, "\chi"]
& \\
A_{1}
\arrow[rr, "\chi"']
& & A_{1}
\end{tikzcd}
\end{equation*}
\end{corollary}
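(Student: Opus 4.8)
The plan is to start from Theorem~\ref{thm:johnstone}: for the internal split opfibration $(f, \varphi)$ the pair $(\Lambda, V)$ is a strict factorisation system on $A$, so Proposition~\ref{prop:isostructure} supplies the morphism $\chi \colon A_1 \to A_1$ appearing in the statement, together with the fact that $d_1(\varphi_1 \times j_1) \colon \Lambda_1 \times_{A_0} V_1 \to A_1$ is an isomorphism. I would use two consequences of this repeatedly: that $d_1(\varphi_1\times j_1)$ is a monomorphism, so a morphism into $A_1$ is pinned down by its ``chosen-lift component'' in $\Lambda_1$ and its ``vertical component'' obtained by postcomposing with $\chi$; and the uniqueness condition \eqref{eqn:isounique}, namely $\chi \circ d_1(\varphi_1\times j_1) = j_1\pi_1$, which computes $\chi$ on any morphism presented as a chosen lift followed by a vertical morphism and returns that vertical morphism. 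I would also use that $\varphi_1$ and $j_1$ send the identities of $\Lambda$ and of $V$ to identities of $A$, and the resulting identity $p_1 i_0 = 1_{A_0}$.

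With these in hand the first, third and fourth diagrams are short. The third, $d_1\chi = d_1$, follows from \eqref{eqn:isocomp} by writing $\chi = d_2\widehat\chi$ and using the face identity $d_1 d_2 = d_1 d_1$ on $A_2$ from \eqref{eqn:identity-comp}: $d_1\chi = d_1 d_2\widehat\chi = d_1 d_1\widehat\chi = d_1 \circ 1_{A_1} = d_1$. The first, $\chi\varphi_1 = i_0 p_1$, records that a chosen lift is its own chosen lift: the lens condition $f_1\varphi_1 = \phibar_1$ of \eqref{eqn:internallens} and $d_0\varphi_1 = d_0$ from \eqref{eqn:cofundomcod} give $\langle d_0, f_1\rangle\varphi_1 = 1_{\Lambda_1}$, so (writing $i_0$ also for the identity map of $V$) the morphism $\langle 1_{\Lambda_1}, i_0 p_1\rangle \colon \Lambda_1 \to \Lambda_1\times_{A_0} V_1$ is well defined and $d_1(\varphi_1\times j_1)$ carries it to $\varphi_1$ by the unit axiom \eqref{eqn:unit-assoc}; applying \eqref{eqn:isounique} gives $\chi\varphi_1 = j_1(i_0 p_1) = i_0 p_1$. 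The fourth, $\chi\chi = \chi$, is the dual remark that a vertical morphism is its own vertical part: since $f_1\chi = i_0 d_1 f_1$ by \eqref{eqn:isofibre}, the morphism $\langle i_0 d_0\chi,\, \langle\chi, d_1 f_1\rangle\rangle \colon A_1 \to \Lambda_1\times_{A_0} V_1$ is well defined and $d_1(\varphi_1\times j_1)$ carries it to $\chi$ by \eqref{eqn:unit-assoc}, whence \eqref{eqn:isounique} gives $\chi\chi = j_1\langle\chi, d_1 f_1\rangle = \chi$.

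The remaining diagram is the main work; write $\Theta \colon A_2 \to A_2$ for the morphism $\langle \chi d_1(\chi\times\varphi_1\langle d_0, f_1\rangle),\, \chi d_2\rangle$, so the claim is $d_1\Theta = \chi d_1$. The idea is again to exhibit $d_1\Theta$ as the vertical part of the composite $d_1 \colon A_2 \to A_1$ and invoke \eqref{eqn:isounique}. First I would show $d_1\Theta$ is vertical, i.e.\ $f_1 d_1\Theta$ is an identity: applying $f_1$ turns $\Theta$ into a composite of two morphisms each of which is an identity by \eqref{eqn:isofibre}, so $d_1\Theta$ factors through $j_1$, say as $\underline\Theta \colon A_2 \to V_1$. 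Next I would verify that $\Xi := \langle \langle d_0, f_1\rangle d_1,\, \underline\Theta\rangle \colon A_2 \to \Lambda_1\times_{A_0} V_1$ is well defined into the pullback \eqref{eqn:factorpullback}; the compatibility reduces, via \eqref{eqn:isodom}, to the fact that the chosen lift of a composite has the expected domain, which follows from the cofunctor composition axiom \eqref{eqn:cofunidcomp}. Finally I would check that $d_1(\varphi_1\times j_1)\Xi = d_1$, i.e.\ that prepending to $d_1\Theta$ the chosen lift of the composite recovers the composite; granting this, \eqref{eqn:isounique} applied to $\Xi$ gives $\chi d_1 = j_1\pi_1\Xi = d_1\Theta$. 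This last verification is the main obstacle: it is the internal form of the computation that $\chi(h)\circ\chi(\varphi\circ\chi(g))\circ\varphi(x, f(h\circ g))$ collapses to $h\circ g$, and it combines \eqref{eqn:isocomp} applied at $g$, at $h$ and at the intermediate morphism $d_1(\chi\times\varphi_1\langle d_0, f_1\rangle)$, the cofunctor composition axiom \eqref{eqn:cofunidcomp} used to split off the chosen lift of the composite, and the associativity axiom \eqref{eqn:unit-assoc} of $A$ for the re-bracketing; carrying it out cleanly is best done by pulling the data back along $A_3$. The remaining bookkeeping with domains, codomains and $f$-images is routine.
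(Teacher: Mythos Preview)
Your approach is correct and is exactly what the paper does: its entire proof reads ``These diagrams may be derived using the axioms in Proposition~\ref{prop:isostructure}.'' You have simply supplied the detail the paper omits, deriving each diagram from \eqref{eqn:isodom}--\eqref{eqn:isocomp} together with the internal category and (co)functor axioms, and your sketches for the first, third and fourth diagrams are clean while your plan for the second (showing $d_1\Theta$ is vertical and then identifying it as the $V$-component of $d_1$ via \eqref{eqn:isounique}) is the right strategy.
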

\begin{proof}
These diagrams may be derived using the axioms in 
Proposition~\ref{prop:isostructure}. 
\end{proof}

\begin{remark}
The definition of an internal split opfibration in 
\cite[Ch.~2, Ex.~6]{Joh77} is equivalent to an internal 
lens $(f, \varphi) \colon A \br B$ equipped with a morphism 
$\chi \colon A_{1} \rightarrow A_{1}$ 
satisfying the diagrams in Proposition~\ref{prop:isostructure} and 
Corollary~\ref{cor:johnstone}. 
Therefore by Theorem~\ref{thm:johnstone}, the definition of internal 
split opfibrations provided in Definition~\ref{defn:internalsplitopf} 
agrees with the characterisation in \cite{Joh77} while requiring fewer 
axioms. 
\end{remark}

\section{Concluding remarks} 
\label{sec:conclusion}

This paper has introduced several characterisations of internal 
split opfibrations in terms of both property and structure.
The core of this approach has been the treatment of internal split 
opfibrations as \emph{morphisms} between internal categories, 
given by an internal functor and an internal cofunctor which form 
an internal lens. 
This is unlike many previous approaches, which instead consider split 
opfibrations as \emph{objects} over a fixed base category. 
The treatment of internal split opfibrations as morphisms is not 
only of theoretical interest, but is important for the application 
of delta lenses in computer science where compositionality plays a 
central role. 

Another significant component of this treatment of internal split 
opfibrations is that it does not use any of the $2$-categorical 
features of $\CAT(\E)$. 
Instead the paper utilises two special classes of morphisms in 
$\CAT(\E)$ -- isomorphism-on-objects internal functors and internal
discrete opfibrations -- which satisfy the conditions of 
Lemma~\ref{lem:important}, together with a comonad on $\CAT(\E)$, 
given by the right d\'{e}calage construction. 
Consequently, the main characterisation of internal split 
opfibrations in Theorem~\ref{thm:main} may be further generalised 
by replacing $\CAT(\E)$ with a category $\mathcal{C}$ with pullbacks,
together with suitable classes of morphisms $(\mathcal{B}, \mathcal{D})$
and a comonad $D$, however further work is needed to identify 
interesting examples in this general framework.

The theory of internal lenses and internal split opfibrations 
developed in this paper clarifies and extends many results concerning
delta lenses in the computer science literature. 
For example, Proposition~\ref{prop:internallens} provides a simple
diagrammatic characterisation of the delta lens axioms in 
Example~\ref{ex:deltalens}, which further clarifies composition of 
delta lenses by Remark~\ref{rem:lensrepcomp}.
The characterisations of internal split opfibrations presented also 
provide the necessary and sufficient conditions for a delta lens to be
``least-change'', an important property for applications. 
Future work will apply these results to recent research on 
symmetric lenses \cite{JR17, JR19} with the goal of finding
similar conditions for universality. 


\end{document}